\documentclass[12pt,draft]{amsart}
\usepackage[all]{xy}
\usepackage{amsfonts}
\usepackage{amssymb}

\usepackage{microtype}

\usepackage{enumerate}

\usepackage{color}

\renewcommand{\le}{\leqslant}
\renewcommand{\ge}{\geqslant}

\textheight23cm \textwidth16.5cm \addtolength{\topmargin}{-25pt}
\evensidemargin0.cm \oddsidemargin0.cm

\newtheorem{teo}{Theorem}[section]
\newtheorem{lem}[teo]{Lemma}
\newtheorem{prop}[teo]{Proposition}

\theoremstyle{definition}

\newtheorem{rk}[teo]{Remark}

\def\<{\langle}
\def\>{\rangle}
\def\ss{\subset}

\def\a{\alpha}

\def\e{\varepsilon}

\def\r{\rho}

\def\s{\sigma}
\def\t{\tau}

\def\f{{\varphi}}

\def\G{{\Gamma}}

\def\Z{{\mathbb Z}}

\def\Id{\operatorname{Id}}

\def\1{\mathbf 1}
\def\lcm{\operatorname{lcm}}

\def\rwr{\:\mathrm{wr}\:}

%%%%%%%%%%%%%%%%%%%%%%%%%%%%%%%%%%%%%%%%%%%%%%%%%%%%%
%%%%%%%%
\newcommand{\Mat}[4]{\left( \begin{array}{cc}
                            #1 & #2 \\
                            #3 & #4
                      \end{array} \right)}

%%%%%%%%%%%%%%%%%%%%%%%%%%%%%%%%%%%%%%%%%%%%%%%%%%%%
\def\Fix{\operatorname{Fix}}

%%%%%%%%%%

\begin{document}

\title{Reidemeister classes in lamplighter type groups}

\author{Evgenij Troitsky}
\thanks{This work is
supported by the Russian Science Foundation under grant 16-11-10018.}
\address{Dept. of Mech. and Math., Moscow State University,
119991 GSP-1  Moscow, Russia}
\email{troitsky@mech.math.msu.su}
\urladdr{http://mech.math.msu.su/\~{}troitsky}

\keywords{Reidemeister number, $R_\infty$-group, twisted conjugacy
class, residually finite group, restricted wreath product, lamplighter group}
\subjclass[2000]{20C; % Representation theory of groups
20E45; %conjugacy classes
20E22;  	%Extensions, wreath products, and other compositions
20E26;  	%Residual properties and generalizations
20F16;  	%Solvable groups, supersolvable groups [See also 20D10]
22D10%; % Unitary representations of locally compact groups
}

\begin{abstract}
We prove that for any automorphism $\phi$ of the restricted
wreath product $\Z_2 \rwr \Z^k$ and $\Z_3 \rwr \Z^{2d}$
the Reidemeister number $R(\phi)$ is infinite, i.e. these groups
have the property $R_\infty$.

For $\Z_3 \rwr \Z^{2d+1}$ and $\Z_p \rwr \Z^k$, where $p>3$ is prime,
we give examples of automorphisms with finite Reidemeister numbers.
So these groups do not have  the property $R_\infty$.

For these groups and $\Z_m \rwr \Z$, where $m$ is relatively prime
to $6$, we prove the twisted Burnside-Frobenius theorem (TBFT$_f$): 
if $R(\phi)<\infty$, then it is equal to the number of equivalence classes of finite-dimensional
irreducible unitary representations fixed by the action 
$[\rho]\mapsto [\rho\circ\phi]$.
\end{abstract}

\maketitle

\section*{Introduction}
The \emph{Reidemeister number} $R(\phi)$ 
of an automorphism $\phi$ of a (countable discrete) group $G$
is the
number of its \emph{Reidemeister} or \emph{twisted conjugacy classes},
i.e. the classes of the twisted conjugacy equivalence relation:
$g\sim hg\phi(h^{-1})$, $h,g\in G$. Denote by $\{g\}_\phi$ the
class of $g$.

The following two interrelated problems are in the mainstream of
the study of Reidemeister numbers.

In \cite{FelHill} A.Fel'shtyn and R.Hill conjectured that
$R(\phi)$ is equal to the number of fixed points of the
associated homeomorphism $\widehat{\phi}$ of the unitary
dual $\widehat{G}$ (the set of equivalence classes of irreducible
unitary representations of $G$), if one of these numbers
is finite. The action of $\widehat{\phi}$ on the class of a
representation $\rho$ is defined as $[\rho]\mapsto [\rho\circ \phi]$.
This conjecture is called TBFT (twisted Burnside-Frobenius
theorem). 
This statement can be considered as a generalization
to infinite groups and to the twisted case the classical Burnside-Frobenius theorem: the number of conjugacy classes of a finite group is equal to the number of equivalence classes of its irreducible representations. The TBFT conjecture (more precisely, its 
modification TBFT$_f$, taking into account only finite-dimensional
representations)
was proved for polycyclic-by-finite groups in \cite{polyc,feltroKT}.
Preliminary and related results, examples and counter-examples
can be found in \cite{FelHill,FelTro,FelTroVer,FelIndTro,%
ncrmkwb,FelLuchTro,FelTroJGT,TroTwoEx}.

Also A.Fel'shtyn and co-authors 
formulated the second problem
(a historical overview can be found in \cite{FelLeonTro}): 
the problem of description of the class of groups
having the following $R_\infty$ property: $R(\phi)=\infty$ for any automorphism $\phi:G\to G$. Thus, the second problem is in some sense
complementary to the first one: the question about TBFT has no
sense for $R_\infty$ groups (formally having a positive answer).
The property  $R_\infty$ was studied very intensively during the
last years and was proved and disproved for many groups
(see a bibliography overview in \cite{FelLeonTro} and
\cite{FelTroJGT}, and very recent papers
\cite{DekimpeGoncalves2015Abel,SteinTabackWong2015,FelNasy2016JGT} and the literature
therein). For Jiang type spaces the property $R_\infty$ has some direct topological
consequences (see e.g. \cite{GoWon09Crelle}). Relations with group growth are discussed e.g. in \cite{gowon}. Concerning
applications of Reidemeister numbers in Dynamics we refer to \cite{Jiang,FelshB}. 

The $R_\infty$ property was studied for 
the lamplighter group $\Z_2\rwr\Z$ and
some its generalizations being restricted wreath products with
$\Z$ in 
\cite{gowon1,TabackWong2011,SteinTabackWong2015}.
In particular, in \cite{gowon1} it was proved that most part of
groups of the form $\Z_q\rwr\Z$ are not $R_\infty$ groups.
More precisely, it is an $R_\infty$ group if and only if
$(q,6)\neq 0$.
In contrast with this result, in \cite{SteinTabackWong2015}
it is proved that the generalizations $\G_d(q)$
of the lamplighter group always have the $R_\infty$ property
for $d>2$ (for $d=2$ one has $\G_d(q)\cong \Z_q\rwr\Z$).
The groups $\G_d(q)$ admit a Cayley graph isomorphic
to a Diestel-Leader graph $DL_d(q)$.
The lamplighter group and its generalizations attracted a lot
of attention recently, in particular due to its relations with
automata groups, self-similar groups, and branch groups
(see e.g. \cite{BarNeuWoe2008}).

For groups under consideration in the present paper,
even for $k=2$, the situation is much more complicated, because $\Z$ has only one automorphism with finite Reidemeister number, namely $-\Id$, and its square has infinite Reidemeister number, but 
for $\Z\oplus\Z$ we have a lot of automorphisms
with finite Reidemeister numbers, and many of them have finite Reidemeister numbers for all their iterations (see, e.g. \cite{FelshB}).

In the present paper 
we prove that for any automorphism $\phi$ of the restricted
wreath product $\Z_2 \rwr \Z^k$ 
(Theorem \ref{teo:ihbeskonechno})
and $\Z_3 \rwr \Z^{2d}$ (Theorem \ref{teo:casep3})
the Reidemeister number $R(\phi)$ is infinite, i.e. these groups
have the property $R_\infty$.

For $\Z_3 \rwr \Z^{2d+1}$ (Theorem \ref{teo:casep3})
and $\Z_p \rwr \Z^k$, where $p>3$ is prime, 
(Theorem \ref{teo:r_infty_and_not})
we give examples of automorphisms with finite Reidemeister numbers.
So these groups do not have  the property $R_\infty$.

For these groups and $\Z_m \rwr \Z$, where $m$ is relatively prime
to $6$, we prove in Theorem \ref{teo:TBFTforlamplightertype}
the twisted Burnside-Frobenius theorem (TBFT$_f$): 
if $R(\phi)<\infty$, then it is equal to the number of equivalence classes of finite-dimensional
irreducible unitary representations fixed by $\widehat{\phi}:\rho\mapsto\rho\circ\phi$.

This gives (probably first)
examples of finitely generated residually finite
but not almost polycyclic groups
(having infinitely generated subgroup, \cite[p.~4]{DSegalPoly}), for which the TBFT is true.

\medskip
\textsc{Acknowledgement:} 
The author is indebted to A.~Fel'shtyn for helpful 
discussions in the Max-Planck Institute for
Mathematics (Bonn) in February, 2017  and the MPIM for 
supporting this visit.

The author is grateful to L.~Alania, R.~Jimenez Benitez, and V.~Manuilov for 
valuable advises and suggestions. 

This work is supported by the Russian Science Foundation under grant 16-11-10018.

\section{Preliminaries}\label{sec:prelimi}

The following easy statement is well known:
\begin{prop}\label{prop:epim}
Suppose, $H$ is a $\phi$-invariant normal subgroup of $G$
and $\overline{\phi}:G/H \to G/H$ is the induced automorphism. Then
$\phi$ induces an epimorphism of each Reidemeister class
 of $\phi$ onto some Reidemeister class of 
$\overline{\phi}$. In particular, one has $R(\overline{\phi})\le R(\phi)$. 
\end{prop}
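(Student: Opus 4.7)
The plan is to take the quotient map $\pi:G\to G/H$ and show that its restriction to any single Reidemeister class of $\phi$ lands surjectively on a single Reidemeister class of $\overline{\phi}$. The inequality on Reidemeister numbers then drops out immediately, because every element of $G/H$ lifts to $G$ so every class of $\overline{\phi}$ is hit.

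The first step is to verify that $\pi$ is well defined on classes: if $g'=hg\phi(h^{-1})$ in $G$, then applying $\pi$ and using the compatibility $\pi\circ\phi=\overline{\phi}\circ\pi$ (which is the very definition of the induced automorphism) gives
\[
\pi(g')=\pi(h)\,\pi(g)\,\overline{\phi}(\pi(h)^{-1}),
\]
so $\pi(\{g\}_\phi)\subseteq\{\pi(g)\}_{\overline{\phi}}$.

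The second step is surjectivity onto that target class. An arbitrary element of $\{\pi(g)\}_{\overline{\phi}}$ has the form $\overline{k}\,\pi(g)\,\overline{\phi}(\overline{k}^{-1})$ for some $\overline{k}\in G/H$; picking any lift $k\in G$, the element $kg\phi(k^{-1})$ belongs to $\{g\}_\phi$ and projects under $\pi$ to exactly the chosen element. Hence $\pi$ is an epimorphism of each Reidemeister class of $\phi$ onto its image class.

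For the inequality, the induced assignment $\{g\}_\phi\mapsto\{\pi(g)\}_{\overline{\phi}}$ is a surjection from the set of Reidemeister classes of $\phi$ onto the set of Reidemeister classes of $\overline{\phi}$ (surjectivity again follows from the existence of set-theoretic lifts), whence $R(\overline{\phi})\le R(\phi)$. There is no real obstacle here; the only point to be careful about is to use an honest lift $k\in G$ of $\overline{k}\in G/H$ when establishing surjectivity on each class, which is legitimate because $\pi$ is surjective.
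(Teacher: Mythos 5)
Your proof is correct: the paper states this proposition without proof as a well-known fact, and your argument (well-definedness of the projection on twisted conjugacy classes via $\pi\circ\phi=\overline{\phi}\circ\pi$, surjectivity onto the target class by lifting $\overline{k}$ to $k\in G$, and the induced surjection on the sets of classes giving $R(\overline{\phi})\le R(\phi)$) is exactly the standard argument intended here.
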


Denote by $C(\phi)$ the fixed point subgroup.
The following much more non-trivial statement can be extracted
from \cite{go:nil1}   (see also \cite{polyc}):
\begin{lem}\label{lem:cherezfixed}
In the above situation
$R(\phi|_H)\le R(\phi)\cdot |C(\overline{\phi})|.$
\end{lem}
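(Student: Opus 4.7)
The plan is to compare each $\phi|_H$-Reidemeister class in $H$ with the $\phi$-Reidemeister class of $G$ that contains it, and to bound by $|C(\overline{\phi})|$ the number of $\phi|_H$-classes that can fit inside one $\phi$-class meeting $H$.

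First I would introduce the canonical projection $\pi:G\to G/H$ and set $\widetilde{C}:=\pi^{-1}(C(\overline{\phi}))$, a subgroup of $G$ containing $H$ with $\widetilde{C}/H\cong C(\overline{\phi})$. For any $h_0\in H$ and $g\in G$, since $h_0\in H$, the image of $gh_0\phi(g^{-1})$ in $G/H$ equals $gH\cdot (\phi(g)H)^{-1}$; hence $gh_0\phi(g^{-1})\in H$ if and only if $gH=\phi(g)H$, i.e.\ iff $\overline{\phi}(gH)=gH$, i.e.\ iff $g\in\widetilde{C}$. Consequently, the intersection $\{h_0\}_\phi\cap H$ is precisely the orbit of $h_0$ under the twisted action of $\widetilde{C}$ on $H$ given by $g\cdot h=gh\phi(g^{-1})$.

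Next I would exploit that $H\triangleleft\widetilde{C}$ and that the twisted action of $H$ on itself has $\phi|_H$-Reidemeister classes as orbits. Replacing $g$ by $h_1 g$ on the left with $h_1\in H$ gives $(h_1 g)\cdot h_0=h_1\,(g\cdot h_0)\,\phi(h_1^{-1})$, which lies in the same $\phi|_H$-class as $g\cdot h_0$. Therefore the partition of the $\widetilde{C}$-orbit of $h_0$ into $\phi|_H$-classes is indexed (possibly with repetitions) by the set of left cosets $H\backslash\widetilde{C}$, yielding at most $[\widetilde{C}:H]=|C(\overline{\phi})|$ classes per $\phi$-class.

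Finally, every $\phi|_H$-class of $H$ is contained in a unique $\phi$-class of $G$, namely the one determined by the inclusion $H\hookrightarrow G$, and only the $\phi$-classes meeting $H$ contribute to $R(\phi|_H)$. Summing the previous bound over the at most $R(\phi)$ such classes gives $R(\phi|_H)\le R(\phi)\cdot|C(\overline{\phi})|$. The crux of the argument, and the step that requires the most care, is the identification of the set $\{g\in G:gh_0\phi(g^{-1})\in H\}$ with the full subgroup $\widetilde{C}$ \emph{independently of} $h_0\in H$; once this is in place, the rest reduces to the classical principle that a $\widetilde{C}$-orbit decomposes into at most $[\widetilde{C}:H]$ orbits of the normal subgroup $H$.
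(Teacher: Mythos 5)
Your proof is correct. Note that the paper offers no argument for this lemma at all: it is stated as something that ``can be extracted from'' \cite{go:nil1} (see also \cite{polyc}), where it follows from the standard analysis of Reidemeister classes in an extension $1\to H\to G\to G/H\to 1$, namely that the classes of $\phi|_H$ which fuse inside a single class of $\phi$ form an orbit of an action of the fixed-point group $C(\overline{\phi})$. Your argument is a self-contained, hands-on version of exactly that mechanism: setting $\widetilde{C}=\pi^{-1}(C(\overline{\phi}))$, you correctly observe that for $h_0\in H$ one has $gh_0\phi(g^{-1})\in H$ if and only if $g\in\widetilde{C}$ (independently of $h_0$, since $\pi(h_0)=e$), so $\{h_0\}_\phi\cap H$ is the twisted $\widetilde{C}$-orbit of $h_0$; then the identity $(h_1g)\cdot h_0=h_1\,(g\cdot h_0)\,\phi(h_1^{-1})$ shows this orbit splits into at most $[\widetilde{C}:H]=|C(\overline{\phi})|$ twisted $H$-orbits, i.e.\ $\phi|_H$-classes, and summing over the at most $R(\phi)$ classes of $\phi$ meeting $H$ gives the bound. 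Two minor remarks: the cosets indexing the $\phi|_H$-classes are the right cosets $Hg$ (your notation $H\backslash\widetilde{C}$ is the correct one, the word ``left'' is a slip, and normality of $H$ in $\widetilde{C}$ is not actually needed for this count); and the inequality is trivially true when $R(\phi)$ or $|C(\overline{\phi})|$ is infinite, so the argument is only needed in the finite case. Providing this elementary proof is arguably an improvement over the paper's bare citation.
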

 
It is well known (see \cite{FelTro}) the following. 
\begin{lem}\label{lem:abelcase} 
For an abelian group $G$ the Reidemeister class of the unit element
is a subgroup, and the other classes are corresponding cosets. 
\end{lem}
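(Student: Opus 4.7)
The plan is to unpack the twisted conjugacy relation in the abelian setting and observe that commutativity turns the map $h\mapsto h\phi(h^{-1})$ into a homomorphism, whose image is precisely the Reidemeister class of the unit.

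First I would write out the class of an arbitrary element $g\in G$. By definition,
\[
\{g\}_\phi = \{\, hg\phi(h^{-1}) : h\in G\,\}.
\]
Since $G$ is abelian, $hg\phi(h^{-1}) = g\cdot h\phi(h^{-1})$, so
\[
\{g\}_\phi = g\cdot H, \qquad \text{where } H := \{\, h\phi(h^{-1}) : h\in G\,\}.
\]
In particular, taking $g=e$ gives $\{e\}_\phi = H$.

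Next I would verify that $H$ is a subgroup of $G$. It contains $e = e\cdot\phi(e^{-1})$. For closure under multiplication, using commutativity,
\[
h_1\phi(h_1^{-1})\cdot h_2\phi(h_2^{-1}) = (h_1 h_2)\,\phi(h_1^{-1}h_2^{-1}) = (h_1h_2)\phi((h_1h_2)^{-1})\in H.
\]
For closure under inverses, $\bigl(h\phi(h^{-1})\bigr)^{-1} = h^{-1}\phi(h) = h^{-1}\phi((h^{-1})^{-1})\in H$. Equivalently, one can note that in the abelian case the map $\psi\colon G\to G$, $\psi(h)=h\phi(h^{-1})$, is a group homomorphism and $H=\Im\psi$.

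Finally, I would check that two elements $g_1,g_2$ lie in the same Reidemeister class if and only if $g_1^{-1}g_2\in H$: indeed $g_2 = hg_1\phi(h^{-1}) = g_1\cdot h\phi(h^{-1})$ is precisely the statement $g_1^{-1}g_2 \in H$. Hence the Reidemeister classes are exactly the cosets $gH$, which completes the proof. There is no real obstacle here; the only point requiring care is the use of commutativity in the closure argument, which is what forces the claim to fail in the non-abelian setting.
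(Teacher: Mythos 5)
Your proof is correct and is exactly the standard argument: in the abelian case $h\mapsto h\phi(h^{-1})$ is a homomorphism, its image is the class of the unit, and the remaining classes are its cosets. The paper itself gives no proof of this lemma (it is cited as well known from \cite{FelTro}), and your write-up supplies precisely the expected verification, so there is nothing to add.
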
 
 
The following statement is very useful in the field. 
\begin{lem}\label{lem:innereqrei}
A right shift by $g\in G$ maps Reidemeister classes of $\phi$
onto Reidemeister classes of $\t_{g^{-1}}\circ \f$, where
$\t_g$ is the inner automorphism: $\t_g(x)=gxg^{-1}$.
In particular, $R(\t_g\circ \phi)=R(\phi)$.
\end{lem}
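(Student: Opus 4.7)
The plan is to verify directly from the definition that the right shift $R_g\colon x\mapsto xg$ intertwines the two twisted-conjugacy relations, and then to note that bijectivity of $R_g$ on $G$ automatically yields a bijection on the sets of Reidemeister classes.

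First I would recall that $x\sim_\phi y$ means there exists $h\in G$ with $y=hx\phi(h^{-1})$, and observe that $R_g$ is a bijection of the underlying set with inverse $R_{g^{-1}}$. The key computation is then the one-liner
$$
h(xg)(\tau_{g^{-1}}\circ\phi)(h^{-1})=h\,x\,g\cdot g^{-1}\phi(h^{-1})\,g=\bigl(hx\phi(h^{-1})\bigr)g,
$$
so $y=hx\phi(h^{-1})$ if and only if $yg=h(xg)(\tau_{g^{-1}}\circ\phi)(h^{-1})$, and with the \emph{same} twisting element $h$. Hence $R_g$ sends the class $\{x\}_\phi$ bijectively onto the class $\{xg\}_{\tau_{g^{-1}}\circ\phi}$, and since $R_g$ is a bijection of $G$, the induced map on class sets is bijective.

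For the ``in particular'' clause, I would just substitute $g^{-1}$ in place of $g$ in the bijection established above to conclude $R(\phi)=R(\tau_g\circ\phi)$.

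There really isn't a hard step here — the entire content is the associativity check above. The only things to watch are keeping track of which side the twisting acts on, and the fact that the inner automorphism $\tau_{g^{-1}}$ is chosen precisely so as to absorb the extra factor of $g$ that appears on the right after shifting; any other choice would leave a dangling $g$ and break the equivalence.
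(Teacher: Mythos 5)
Your proof is correct and is essentially the paper's own argument: the same one-line associativity computation $h(xg)(\tau_{g^{-1}}\circ\phi)(h^{-1})=\bigl(hx\phi(h^{-1})\bigr)g$ (the paper writes it with different letters), with the bijectivity of the right shift and the substitution $g\mapsto g^{-1}$ made explicit. No issues.
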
 

\begin{proof}
Indeed,
$$
xy\f(x^{-1})g=x (yg) g^{-1}\f(x^{-1})g=x (yg)(\t_{g^{-1}}\circ\f)(x^{-1}).
$$
\end{proof}

Also we need the following statement (\cite{FelTroResFin}, \cite[Prop.~3.4]{FelLuchTro}):

\begin{lem}\label{lem:modjab} 
Let $\phi:G\to G$ be an automorphism of a finitely generated residually
finite group $G$ with $R(\phi)<\infty$ $($in particular, $G$ can be a finitely generated 
abelian group$)$. Then the subgroup of fixed elements is finite:
$|C(\phi)|<\infty$.
\end{lem}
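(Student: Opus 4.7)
The plan is to argue in two stages: first dispatch the finitely generated abelian case directly, then reduce the general finitely generated residually finite case to it via the abelianization, closing with a residually-finite-specific argument.

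For $G$ finitely generated abelian, write $G \cong \Z^r \oplus T$ with $T$ the finite torsion part. By Lemma \ref{lem:abelcase}, $R(\phi)=|G/(\mathrm{id}-\phi)(G)|$, and its finiteness forces $\mathrm{id}-\phi$ to have finite cokernel on $G$. Restricted to the free part $\Z^r$, this means $\mathrm{id}-\phi$ has nonzero determinant, hence is injective; consequently $C(\phi)=\ker(\mathrm{id}-\phi)\subseteq T$ is finite.

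For the general case, pass to the abelianization $\pi\colon G\to G^{ab}$ with induced $\phi^{ab}$. Proposition \ref{prop:epim} gives $R(\phi^{ab})\le R(\phi)<\infty$, so by the abelian step $|C(\phi^{ab})|<\infty$. Since $\pi(C(\phi))\subseteq C(\phi^{ab})$, the image of $C(\phi)$ in $G^{ab}$ is finite, and so $C(\phi)\cap [G,G]$ has finite index in $C(\phi)$. Lemma \ref{lem:cherezfixed} applied to the $\phi$-invariant normal subgroup $[G,G]$ also gives $R(\phi|_{[G,G]})\le R(\phi)\cdot|C(\phi^{ab})|<\infty$, so the same hypothesis propagates to the commutator subgroup.

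The natural continuation is to iterate this reduction down the derived series, which closes the proof for solvable $G$ (finite derived length). The main obstacle in the general case is that $[G,G]$ need not be finitely generated, so direct induction on the derived series is unavailable. The residually-finite workaround, as carried out in \cite{FelTroResFin, FelLuchTro}, is a global finite-quotient argument: if $C(\phi)\cap[G,G]$ were infinite, the characteristic $\phi$-invariant filtration $N_k=\bigcap\{H\le G: [G:H]\le k\}$ (with $\bigcap_k N_k=\{e\}$) would separate arbitrarily many fixed elements in the finite quotients $G/N_k$, forcing $|C(\overline\phi_k)|$ to grow unboundedly; combined with the uniform bound $R(\overline\phi_k)\le R(\phi)$ from Proposition \ref{prop:epim}, Lemma \ref{lem:cherezfixed}, and the equality $R=|C|$ for finite abelian quotients, this yields the required contradiction.
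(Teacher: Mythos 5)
First, a remark on the comparison itself: the paper does not prove this lemma at all --- it is quoted from \cite{FelTroResFin} and \cite[Prop.~3.4]{FelLuchTro} --- so your attempt can only be judged on its own. Your abelian step is essentially fine (modulo the small imprecision that $\phi$ need not preserve a chosen free complement of the torsion subgroup; one should argue on $G/T\cong\Z^r$, where $R(\ov\phi)<\infty$ gives $\det(\Id-\ov\phi)\neq 0$, hence $C(\ov\phi)=0$ and $C(\phi)\subseteq T$). The reduction via the abelianization and Lemma \ref{lem:cherezfixed} is also correct, although it plays no role in your final argument; and the parenthetical claim that iterating along the derived series ``closes the proof for solvable $G$'' is unjustified for exactly the reason you yourself name next ($[G,G]$ need not be finitely generated --- indeed it is not for the groups of this paper).

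The genuine gap is in the last step, which is where all the content of the lemma sits. You separate infinitely many fixed elements in the finite characteristic quotients $G/N_k$, so $|C(\ov\phi_k)|\to\infty$ while $R(\ov\phi_k)\le R(\phi)$ stays bounded, and you then invoke ``the equality $R=|C|$ for finite abelian quotients'' to get a contradiction. But the quotients $G/N_k$ are finite, not abelian, and for non-abelian finite groups this equality is false (for the identity automorphism $R$ is the number of conjugacy classes while $|C|=|G|$); nor can you replace the $N_k$ by subgroups with abelian quotients, since a finitely generated residually finite group may be perfect and then has no nontrivial finite abelian quotients at all, although the lemma must cover it. So as written no contradiction is obtained. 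What is actually needed is a uniform finite-group statement: for a finite group $K$ and automorphism $\psi$, $|C(\psi)|$ is bounded by a constant depending only on $R(\psi)$. Such a bound does hold --- e.g., from the twisted class equation $|K|=\sum_{i=1}^{R(\psi)}|K|/|C(\t_{g_i}\circ\psi)|$, i.e. $\sum_i |C(\t_{g_i}\circ\psi)|^{-1}=1$, and the finiteness of decompositions of $1$ into a fixed number of unit fractions --- or one can argue as in \cite{FelTroResFin,FelLuchTro}; but some such argument must be supplied, and this missing finite-group input is precisely the non-trivial content carried by the citations.
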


Note, that this is not correct for infinitely generated groups, see \cite{TroTwoEx}.

Combining this lemma with some results of \cite{go:nil1}
one can prove:
\begin{lem}\label{lem:nessuffinRei}
Suppose in the situation of Lemma \ref{prop:epim} that
$G/H$ is a finitely generated residually finite group.
Then $R(\phi)<\infty$ if and only if $R(\overline{\phi})<\infty$
and $R(\tau_g \phi')<\infty$ for any $g\in G$.
\end{lem}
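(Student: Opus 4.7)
The plan is to prove the two implications using the basic observation that, for any $g\in G$, the automorphism $\tau_g\phi$ of $G$ satisfies $R(\tau_g\phi)=R(\phi)$ (Lemma~\ref{lem:innereqrei}), induces $\tau_{\overline g}\overline\phi$ on $G/H$ (whose Reidemeister number equals $R(\overline\phi)$ by the same lemma), and restricts to $\tau_g\phi'$ on $H$, where $\phi':=\phi|_H$. Note that $H$ is indeed $\tau_g\phi$-invariant because it is normal and $\phi$-invariant.

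For $(\Rightarrow)$, assume $R(\phi)<\infty$. Proposition~\ref{prop:epim} gives $R(\overline\phi)\le R(\phi)<\infty$. Fix any $g\in G$: since $G/H$ is finitely generated residually finite and $R(\tau_{\overline g}\overline\phi)=R(\overline\phi)<\infty$, Lemma~\ref{lem:modjab} forces $|C(\tau_{\overline g}\overline\phi)|<\infty$. Applying Lemma~\ref{lem:cherezfixed} to $\tau_g\phi$ and the normal subgroup $H$ then yields
$$R(\tau_g\phi')\le R(\tau_g\phi)\cdot|C(\tau_{\overline g}\overline\phi)|<\infty.$$

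For $(\Leftarrow)$, I would pick lifts $g_1,\ldots,g_r\in G$ of representatives of the $r=R(\overline\phi)$ Reidemeister classes of $\overline\phi$. Every $\phi$-class of $G$ projects to some $[\overline{g_i}]$ (Proposition~\ref{prop:epim}), and in fact has a representative in $g_iH$: if $\overline g=\overline y\,\overline{g_i}\,\overline\phi(\overline y^{-1})$, then lifting $\overline y$ to $y\in G$ and replacing $g$ by $y^{-1}g\phi(y)\sim_\phi g$ produces an element of $g_iH$. Let $N_i$ be the number of $\phi$-classes of $G$ mapping to $[\overline{g_i}]$. A direct substitution $y=g_izg_i^{-1}\in H$ (for $z\in H$) shows that if $h_2=zh_1(\tau_{\phi(g_i)}\phi')(z^{-1})$ in $H$, then $g_ih_2=yg_ih_1\phi(y^{-1})$ in $G$. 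Hence the assignment $h\mapsto\{g_ih\}_\phi$ descends to a surjection from the Reidemeister classes of $\psi_i:=\tau_{\phi(g_i)}\phi'$ on $H$ onto the $\phi$-classes of $G$ mapping to $[\overline{g_i}]$, so $N_i\le R(\tau_{\phi(g_i)}\phi')<\infty$ by hypothesis. Summing yields $R(\phi)\le\sum_{i=1}^{r}R(\tau_{\phi(g_i)}\phi')<\infty$.

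The main obstacle is the $(\Leftarrow)$ direction: one must identify the correct twisted restriction $\psi_i=\tau_{\phi(g_i)}\phi'$ controlling the $\phi$-classes in each fibre $g_iH$ of $G\to G/H$, and verify that the resulting map on Reidemeister class sets is both well-defined and surjective. Once these bookkeeping steps are set up, the bound on $R(\phi)$ is immediate from the assumed finiteness of $R(\overline\phi)$ and of each $R(\tau_g\phi')$.
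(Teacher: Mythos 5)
Your argument is correct and is essentially the paper's intended proof: the paper gives no written argument for this lemma, only stating that it follows by combining Lemma \ref{lem:modjab} with results of \cite{go:nil1}, and your two implications supply exactly those ingredients --- the forward direction via Lemmas \ref{lem:innereqrei}, \ref{lem:modjab} and \ref{lem:cherezfixed} applied to $\tau_g\phi$, and the backward direction via the surjection $\{h\}_{\tau_{\phi(g_i)}\phi'}\mapsto\{g_i h\}_{\phi}$ from the Reidemeister classes of the twisted restrictions onto the $\phi$-classes lying over each class of $\overline{\phi}$, which is precisely the content imported from \cite{go:nil1}. The key identity $y g_i h\,\phi(y^{-1})=g_i\,\bigl(z h\,(\tau_{\phi(g_i)}\phi')(z^{-1})\bigr)$ for $y=g_i z g_i^{-1}$, $z\in H$, checks out, so your well-definedness and surjectivity claims (and hence the bound $R(\phi)\le\sum_i R(\tau_{\phi(g_i)}\phi')$) are valid.
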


\section{The case of $\Z_2\rwr\Z^k$}\label{sec:wreathex}
 
Let $\G:=\Z_2 \:\mbox{wr}\: \Z^k$ be a restricted wreath product. In other words,  
$$\G=\oplus_{x\in \Z^k} (\Z_2)_{(x)}\rtimes_\alpha \Z^k, 
\qquad (\Z_2)_{(x)}\cong \Z_2,
\qquad
\a(y)(\delta_{x}):=\delta_{y+x},
$$
where $y\in \Z^k$ and $\delta_{x}$ is a unique non-trivial element of 
$(\Z_2)_{(x)}\ss\G$. The direct sum supposes only finitely many non-trivial components for each element (in contrast with the direct product corresponding to the (unrestricted) wreath product).

The group $\G$ is a finitely generated metabelian group, in particular,
residually finite (see e.g. \cite{Robinson}).

Let $\phi:\G\to\G$ be an automorphism. We will prove that $R(\phi)=\infty$.
Denote $\Sigma:=\oplus_{x\in \Z^k} (\Z_2)_{(x)}\subset \G$. Then $\Sigma$ is
a characteristic subgroup as the torsion subgroup. 
Denote the restriction of $\phi$ by $\phi':\Sigma \to
\Sigma$, and the quotient automorphism by $\overline{\phi}: \Z^k\to \Z^k$. 

If $R(\phi)<\infty$, then $R(\overline{\phi})<\infty$ 
by Proposition \ref{prop:epim}.
Hence, by Lemma \ref{lem:modjab},
$\overline{\phi}$ has finitely many fixed elements. Thus, by Lemma \ref{lem:cherezfixed},
$R(\phi')<\infty$. Hence, to prove that $R(\phi)=\infty$, it is sufficient to verify
that $R(\phi')=\infty$.

Since $\Sigma$ is abelian, the results of e.g. \cite{Curran2008} imply that
\begin{equation}\label{eq:relationforautom}
\phi'(\alpha(g)(h))=\alpha(\overline{\phi}(g))(\phi'(h)),\qquad h\in\Sigma,\quad
g\in\Z^k.
\end{equation}

Any element of $\Sigma$ is a finite sum of some elements $\delta_{x}$. Let
\begin{equation}\label{eq:phiotnulia}
\phi'(\delta_{0})=\delta_{x(1)}+\dots+\delta_{x(n)}.
\end{equation}

The following lemma generalizes \cite[Prop. 2.1]{gowon1}
from the case $k=1$ to arbitrary $k$.

\begin{lem}\label{lem:odinobraz}
In {\rm (\ref{eq:phiotnulia})} one has $n=1$. Moreover, $\phi'$ is a permutation of
$\delta_{x}$'s.
\end{lem}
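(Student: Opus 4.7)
The plan is to identify the abelian group $\Sigma$ with the group algebra $\Z_2[\Z^k]$, equivalently the Laurent polynomial ring $\Z_2[t_1^{\pm 1},\dots,t_k^{\pm 1}]$, so that $\delta_x$ corresponds to the monomial indexed by $x\in\Z^k$. Under this identification the $\Z^k$-action $\a(g)$ is simply multiplication by the group element $g$ in the ring, so the compatibility relation (\ref{eq:relationforautom}) becomes $\phi'(g\cdot h)=\overline{\phi}(g)\cdot \phi'(h)$ for all $g\in\Z^k$ and $h\in\Sigma$. Setting $f:=\phi'(\delta_0)$, one immediately obtains $\phi'(\delta_g)=\overline{\phi}(g)\cdot f$, and extending by $\Z_2$-additivity gives $\phi'(h)=\overline{\phi}(h)\cdot f$ for every $h\in\Sigma$, where $\overline{\phi}$ is extended $\Z_2$-linearly to a ring automorphism of $\Z_2[\Z^k]$. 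In particular the restriction $\phi'$ is completely determined by the single element $f$.

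Next, since $\phi'$ is an automorphism of $\Sigma$ and $\overline{\phi}$ is a bijection of $\Z_2[\Z^k]$, the formula $\phi'(h)=\overline{\phi}(h)\cdot f$ shows that multiplication by $f$ must be a bijection of $\Z_2[\Z^k]$. Equivalently, $f$ has to be a unit in this ring. The key algebraic input is the description of units of a group algebra of a torsion-free abelian group over a field: $\Z_2[\Z^k]^{\times}=\{\delta_x : x\in\Z^k\}$. This is standard and follows by totally ordering $\Z^k$ (whence $\Z_2[\Z^k]$ is an integral domain) and comparing the leading and trailing monomials of a putative pair of inverse elements.

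Applying this, $f=\delta_{x(0)}$ for a unique $x(0)\in\Z^k$, which already proves that $n=1$ in (\ref{eq:phiotnulia}). Substituting back yields $\phi'(\delta_g)=\delta_{\overline{\phi}(g)+x(0)}$ for every $g\in\Z^k$, exhibiting $\phi'$ as the permutation $\delta_g\mapsto \delta_{\overline{\phi}(g)+x(0)}$ of the set $\{\delta_x\}_{x\in\Z^k}$. The only step that is not a formal manipulation is the identification of the units of $\Z_2[\Z^k]$; everything else is a direct transcription of the twisted-equivariance identity (\ref{eq:relationforautom}) into the language of the group algebra, and the conclusion follows on one line once $f$ is forced to be a monomial.
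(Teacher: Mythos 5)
Your proof is correct, but it takes a different route from the one in the paper. You package the statement as a fact about the group algebra: identifying $\Sigma$ with $\Z_2[\Z^k]$, the twisted equivariance (\ref{eq:relationforautom}) gives $\phi'(h)=\overline{\phi}(h)\cdot f$ with $f=\phi'(\delta_0)$, bijectivity of $\phi'$ forces $f$ to be a unit, and the triviality of units in $K[\Z^k]$ for a field $K$ and the torsion-free (orderable) group $\Z^k$ yields $f=\delta_{x_0}$; all steps, including the leading/trailing-monomial argument for the unit group, are sound. The paper instead argues directly with supports: assuming $n\neq 1$, it looks at the preimage of $\delta_0$, whose image is a sum of distinct shifts $T_1,\dots,T_t$ of the support $T$ of $\phi'(\delta_0)$, and shows by a ``vertex'' argument (lexicographic extreme points with respect to all sign-and-permutation orderings) that these extreme points can never cancel, forcing the union to be the single point $0$. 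Conceptually the two arguments share the same engine --- extreme points of supports under a compatible order --- but your version outsources it to the standard unit theorem, which makes the proof shorter and, notably, would also give Lemma \ref{lem:odinobrazprime} at once (for $\Z_p$ the trivial units are exactly $m\cdot\delta_{x_0}$ with $m\neq 0$), whereas the paper's combinatorial proof is self-contained and is then adapted separately to the prime case.
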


\begin{proof}
First of all, apply (\ref{eq:relationforautom}) to $h=\delta_{0}$.
We have:
\begin{equation}\label{eq:firstfor00}
\phi'(\delta_{g})=\phi'(\alpha(g)(h))=\alpha(\overline{\phi}(g))(\phi'(\delta_{0})).
\end{equation}
Thus, for any $g\in \Z^k$, the element $\phi'(\delta_{g})$ is obtained by the appropriate shift of indexes in the right side expression in (\ref{eq:phiotnulia}). 

Now suppose that $n\neq 1$, and $\phi'(h)=\delta_{0}$
for some $h=\delta_{r(1)}+\dots+\delta_{r(t)}$. 
Then $t\neq 1$, because the statement of the lemma for $\phi'$
and its inverse are equivalent. Denote by $T\subset \Z^k$
the support of $\phi'(\delta_{0})$, i.e., 
$$
T=\{x(1),\dots, x(n)\}.
$$
Denote by $T_1,\dots,T_t$ the supports of  
$\phi'(\delta_{r(1)}),\dots,\phi'(\delta_{r(t)})$
respectively. They are appropriate distinct shifts of $T$.
Denote $S:=T_1\cup\dots\cup T_t$ (without cancellations).
After cancellations in $\Sigma$ (i.e. excluding of points
in $S$ covered by an even number of $T_j$'s) we should obtain only
one point, namely, $0$.

Introduce now the notion of 
$(\e_1 \s_1,\e_2 \s_2,\dots,\e_k \s_k)$-\emph{vertex} $(\e_1 \s_1,\e_2 \s_2,\dots,\e_k \s_k)[R]\in R$
for any bounded subset $R\ss \Z^k$, where $(\s_1,\dots,\s_k)$ is
a permutation of $(1,\dots,k)$ and $\e_i=\pm 1$. We define it
inductively in the following way: $R_{k-1}$ is the subset of
points of $R$ with minimal (if $\e_1=-1$) or maximal (if $\e_1=+1$)
coordinate number $\s_1$, $R_{k-2}$ is the subset of
points of $R_{k-1}$ with minimal (if $\e_2=-1$) or maximal 
(if $\e_2=+1$)
coordinate number $\s_2$, and so on. Then $R_0$ is one point.
This point we define to be 
$(\e_1 \s_1,\e_2 \s_2,\dots,\e_k \s_k)[R]$.
This point also can be considered as a lexicographic 
maximum of points of $R$
for the ordering $\sigma$ and the inverse direction of
that coordinates, where $\s_j=-1$, i.e. a
\emph{lexicographic maximum with respect to} $(\e_1 \s_1,\e_2 \s_2,\dots,\e_k \s_k)$.

Evidently, $(\e_1 \s_1,\e_2 \s_2,\dots,\e_k \s_k)[T_j]$
is a $\Z^k$-shift of $(\e_1 \s_1,\e_2 \s_2,\dots,\e_k \s_k)[T]$
and $T_j=T_i$ if and only if
$$
(\e_1 \s_1,\e_2 \s_2,\dots,\e_k \s_k)[T_j]=
(\e_1 \s_1,\e_2 \s_2,\dots,\e_k \s_k)[T_i]
$$
for at least one (thus, for any)  $(\e_1 \s_1,\e_2 \s_2,\dots,\e_k \s_k)$. Hence, in our situation, they are distinct.

We claim that for any $(\e_1 \s_1,\e_2 \s_2,\dots,\e_k \s_k)$
the vertex $(\e_1 \s_1,\e_2 \s_2,\dots,\e_k \s_k)[S]$
coincides with $(\e_1 \s_1,\e_2 \s_2,\dots,\e_k \s_k)[T_j]$
for one and only one $j$ and is not covered by other points. 
Indeed, the uniqueness follows from the argument above.
If it is covered by some point of $T_j$ other than  
$(\e_1 \s_1,\e_2 \s_2,\dots,\e_k \s_k)[T_j]$, then it would be
not the lexicographic maximum w.r.t.  
$(\e_1 \s_1,\e_2 \s_2,\dots,\e_k \s_k)$, because
$(\e_1 \s_1,\e_2 \s_2,\dots,\e_k \s_k)[T_j]\in S$
would be greater.

Thus no vertex will be canceled. Thus they all coincide  
with $0$ and $S=\{0\}$. Hence, $n=r=1$.

Together with the argument at the beginning of the proof, this
gives the second statement.
\end{proof}

By this lemma, we can define $x_0 \in \Z^k$ by 
$\phi'(\delta_{0})=\delta_{x_0}$.
Equation (\ref{eq:firstfor00}) can be written now as
\begin{equation}\label{eq:secondfor00}
\phi'(\delta_{y})=\delta_{y'},\qquad y':=\overline{\phi}(y)+x_0\in \Z^k.
\end{equation}

\begin{lem}\label{lem:razlichnyeklassy}
If $\delta_{x_1}$ and $\delta_{x_2}$ belong to the same Reidemeister
class of $\phi'$, then
\begin{equation}\label{eq:condfortwconj}
\overline{\phi}^t(x_1)+ \overline{\phi}^{t-1}(x_0)+\dots + \overline{\phi}(x_0)+ x_0= x_2
\end{equation}
or
\begin{equation}\label{eq:condfortwconj1}
\overline{\phi}^t(x_2)+ \overline{\phi}^{t-1}(x_0)+\dots + \overline{\phi}(x_0)+ x_0= x_1
\end{equation}
for some integer $t$.
\end{lem}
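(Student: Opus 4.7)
The plan is to translate the Reidemeister equivalence into a mod-$2$ ``boundary'' equation for a certain affine dynamical system on $\Z^k$, and then to use a simple parity count on orbits to force $x_1$ and $x_2$ onto the same orbit of that system.

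First I would rewrite the Reidemeister relation. Since $\Sigma$ is abelian of exponent $2$, writing the group operation additively the condition $\delta_{x_2}=h\cdot\delta_{x_1}\cdot\phi'(h)^{-1}$ becomes
$$
\delta_{x_1}+\delta_{x_2}=h+\phi'(h)
$$
for some $h=\sum_{i=1}^{m}\delta_{y_i}\in\Sigma$ with distinct $y_i\in\Z^k$. By Lemma \ref{lem:odinobraz} and (\ref{eq:secondfor00}), $\phi'(\delta_y)=\delta_{f(y)}$, where
$$
f\colon\Z^k\to\Z^k,\qquad f(y):=\overline{\phi}(y)+x_0
$$
is an affine bijection, so the equation becomes $\sum_{i=1}^{m}(\delta_{y_i}+\delta_{f(y_i)})=\delta_{x_1}+\delta_{x_2}$ in $\Sigma$.

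The key step is a parity count on each orbit of $f$. Every $f$-orbit $O$ is either a bi-infinite copy of $\Z$ or a finite cycle; in either case I identify $O$ with $\Z$ (respectively $\Z/p\Z$) so that $f$ acts as the shift $i\mapsto i+1$. If the part of $h$ supported on $O$ is $h_O=\sum_{i\in I}\delta_i$ for a finite $I$, then $\phi'(h_O)$ is supported on $I+1$, so $h_O+\phi'(h_O)$ is supported on the symmetric difference $I\triangle(I+1)$, whose cardinality equals $2|I|-2|I\cap(I+1)|$ and is therefore even. Summing over orbits, the $\Sigma$-support of $h+\phi'(h)$ meets every $f$-orbit in an even number of points. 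But the right-hand side $\delta_{x_1}+\delta_{x_2}$ is supported in $\{x_1,x_2\}$; if $x_1\ne x_2$ lay on distinct orbits, the orbit of $x_1$ would contain exactly one support point, contradicting evenness. Hence $x_1$ and $x_2$ belong to the same $f$-orbit, so $f^t(x_1)=x_2$ for some integer $t$ (with $t=0$ covering $x_1=x_2$).

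Finally, a straightforward induction gives
$$
f^t(y)=\overline{\phi}^t(y)+\overline{\phi}^{t-1}(x_0)+\dots+\overline{\phi}(x_0)+x_0\qquad(t\ge 0).
$$
If the $t$ produced above is nonnegative, this is (\ref{eq:condfortwconj}); if $t<0$, applying the same formula with $s=-t>0$ and the roles of $x_1,x_2$ swapped yields (\ref{eq:condfortwconj1}). The only substantive ingredient is the orbit parity observation; once one notices that the $\Z_2$-structure of $\Sigma$ converts the Reidemeister relation into a mod-$2$ boundary identity on the discrete dynamical system $(\Z^k,f)$, the rest is bookkeeping, and I expect no real obstacle beyond checking the two orbit types.
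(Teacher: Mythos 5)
Your proof is correct, and while it starts from the same reduction as the paper (Lemma \ref{lem:abelcase} turns twisted conjugacy in the exponent-two group $\Sigma$ into $\delta_{x_1}+\delta_{x_2}=h+\phi'(h)$, and Lemma \ref{lem:odinobraz} with (\ref{eq:secondfor00}) says $\phi'$ permutes the $\delta$'s via the affine bijection $f(y)=\overline{\phi}(y)+x_0$), your finishing step is genuinely different. The paper extracts an explicit telescoping chain: after cancellation one of the $\delta_{u(j)}$ must equal $\delta_{x_1}$, one of the $\delta_{u(i)'}$ must equal $\delta_{x_2}$, and ``after the appropriate renumbering'' the remaining terms link up as $x_1=u(1)$, $u(1)'=u(2),\dots,u(t)'=x_2$, which when unwound gives exactly the sums in (\ref{eq:condfortwconj})--(\ref{eq:condfortwconj1}). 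You instead decompose $\Z^k$ into $f$-orbits and observe that on each orbit the support of $h+\phi'(h)$ is a symmetric difference $I\triangle f(I)$ with $|I|=|f(I)|$, hence of even cardinality, so the two-point support $\{x_1,x_2\}$ cannot be split between two orbits; then the closed formula for $f^t$ is a trivial induction. Your parity count buys robustness: it sidesteps the renumbering step of the paper, where one must implicitly rule out (or rather not care about) the possibility that the cancelling terms form extra closed cycles in addition to the chain from $x_1$ to $x_2$, whereas the paper's chain argument buys explicitness, producing the intermediate points $u(j)$ and hence the value of $t$ directly. Both arguments use the exponent-two structure in an essential way (symmetric differences / cancellations occur in pairs), so neither transfers verbatim to the $\Z_p$ case treated later by determinant computations. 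One cosmetic remark: in the statement, $t$ is tacitly a nonnegative integer with the sum over $x_0$-terms empty for $t=0$, which is exactly how you treat the degenerate case $x_1=x_2$.
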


\begin{proof}
By Lemma \ref{lem:abelcase},
the elements $\delta_{x_1}$ and $\delta_{x_2}$ belong to the same Reidemeister
class of $\phi'$ if and only if $\delta_{x_1}-\delta_{x_2}=
h-\phi'(h)$ for some $h\in \Sigma$. Representing $h$ as $h=\delta_{u(1)}+\dots +
\delta_{u(t)}$ (with distinct summands)
 and applying (\ref{eq:secondfor00}) one has
$$
\delta_{x_1}-\delta_{x_2}=h-\phi'(h)=\sum_{j=1}^t [\delta_{u(j)}-\delta_{u(j)'} ].
$$
This is the same in $\Sigma$ as
$$
\delta_{x_1}+\delta_{x_2}=\sum_{j=1}^t [\delta_{u(j)}+\delta_{u(j)'} ].
$$

Since  all $\delta_{u(j)}$ are distinct, 
all $\delta_{u(j)'}$ are distinct
too, by Lemma \ref{lem:odinobraz}. 
So the cancellation on the right
can be only when $\delta_{u(j)}=\delta_{u(i)'}$.
So one of $\delta_{u(j)}$ should be equal to $\delta_{x_1}$,
one of $\delta_{u(i)'}$ should be equal to $\delta_{x_2}$ (or vice versa), and the remaining $\delta$'s should annihilate. 
Thus, after the appropriate renumbering of $1,\dots,t$, we have
in the first case:
$$
x_1=u(1),\quad u(1)'=u(2),\quad \dots\quad
u(t-1)'=u(t),\quad
u(t)'=x_2,
$$
or
\begin{eqnarray*}
\overline{\phi}(x_1)+ x_0&=&u(2),\\
\overline{\phi}^2(x_1)+ \overline{\phi} (x_0)+ x_0&=&u(2)'=u(3),\\
\overline{\phi}^3(x_1)+  \overline{\phi}^2 (x_0)+ \overline{\phi} (x_0)+ x_0&=&u(3)'=u(4),\\
\dots&\dots&\dots\\
\overline{\phi}^t(x_1)+\overline{\phi}^{t-1}(x_0)+ \dots+\overline{\phi} (x_0)+ x_0&=&u(t)'= x_2.
\end{eqnarray*}

In the second case we need to interchange $x_1$ and $x_2$:
$$
\overline{\phi}^t(x_2)+\overline{\phi}^{t-1}(x_0)+ \dots+\overline{\phi} (x_0)+ x_0= x_1.
$$
\end{proof}

\begin{teo}\label{teo:ihbeskonechno}
The group $\G=\Z_2\rwr\Z^k$ has the property $R_\infty$.
\end{teo}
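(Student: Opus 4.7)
I would argue by contradiction, assuming $R(\phi)<\infty$. The chain of reductions immediately preceding the statement (Proposition \ref{prop:epim}, Lemmas \ref{lem:modjab} and \ref{lem:cherezfixed}) already yields $R(\phi')<\infty$, so my goal is to contradict this by showing $R(\phi')=\infty$.

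The first step is to eliminate the shift $x_0$ by an inner twist. Let $g=(0,-x_0)\in\Sigma\rtimes_\a\Z^k=\G$; a short computation in the semidirect product gives $\t_g(\delta_y)=\delta_{y-x_0}$, while $\t_g$ restricts to the identity on the complementary $\Z^k\subset\G$. Composing, $(\t_g\circ\phi)(\delta_y)=\delta_{\overline\phi(y)}$ and $\overline{\t_g\circ\phi}=\overline\phi$. Since $R(\t_g\circ\phi)=R(\phi)$ by Lemma \ref{lem:innereqrei}, I may replace $\phi$ by $\t_g\circ\phi$ and assume $x_0=0$ throughout.

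With this normalization, Lemma \ref{lem:razlichnyeklassy} collapses to the statement: $\delta_{x_1}$ and $\delta_{x_2}$ lie in the same Reidemeister class of $\phi'$ only if $x_1$ and $x_2$ lie in the same $\overline\phi$-orbit on $\Z^k$. Hence $R(\phi')\ge\#\{\overline\phi\text{-orbits on }\Z^k\}$, and it suffices to exhibit infinitely many such orbits.

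The main step is this orbit count, which I would carry out via a gcd invariant. Because $\overline\phi\in GL(k,\Z)$ has integer inverse, it carries $\Z$-bases of $\Z^k$ to $\Z$-bases; in particular it sends primitive vectors to primitive vectors and so preserves the coordinate gcd $d(v):=\gcd(v_1,\ldots,v_k)$ for every $v\in\Z^k$. The level sets $G_d:=\{v\in\Z^k:d(v)=d\}$, for $d=1,2,3,\ldots$, are pairwise disjoint, nonempty (each contains $d\cdot e_1$), and $\overline\phi$-invariant, so each contributes at least one $\overline\phi$-orbit. This gives infinitely many orbits and hence $R(\phi')=\infty$, the required contradiction. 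The subtle point to watch, and the reason the inner twist is essential at the outset, is that the unnormalized affine map $y\mapsto\overline\phi(y)+x_0$ need not preserve the coordinate gcd (for instance $y\mapsto -y+3$ on $\Z$ sends $2\mapsto 1$); it is precisely the reduction to $x_0=0$ that exposes the gcd invariant.
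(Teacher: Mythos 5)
Your proposal is correct and follows essentially the same route as the paper: the same reduction chain to $R(\phi')$, the same inner twist by $-x_0\in\Z^k$ to normalize $x_0=0$, and the same use of Lemma \ref{lem:razlichnyeklassy} to reduce everything to counting $\overline\phi$-orbits on $\Z^k$. Your final orbit count via the $GL(k,\Z)$-invariance of the coordinate gcd is only a minor repackaging of the paper's argument (which intersects orbits with the first coordinate axis and uses that the first column of $A^n$ has gcd $1$), and is in fact the same device the paper itself uses later in Lemma \ref{lem:inf_many_orbits}.
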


\begin{proof}
One can reduce the proof of $R(\phi)=\infty$ to the case $x_0=0$.
Indeed, consider the element $w:=-x_0\in\Z^k\subset \G$
and the corresponding inner automorphism $\tau_w:\G\to \G$.
Then by Lemma \ref{lem:innereqrei}, $R(\tau_w\circ\phi)=
R(\phi)$. On the other hand, by the definition of a semidirect
product,
$$
(\tau_w\circ\phi)'(\delta_{0})=\alpha(w)(\phi'(\delta_{0}))=
\alpha( -x_0)(\delta_{x_0})= \delta_{0}.
$$

So, suppose $x_0=0$. Then (\ref{eq:condfortwconj}) 
and (\ref{eq:condfortwconj1})
take the form $ \overline{\phi}^t(x_1)= x_2$ for some integer $t$.
Thus, it is sufficient to prove that $ \overline{\phi}:\Z^k\to \Z^k$ has infinitely many
orbits.

For this purpose denote by $A\in GL_k(\Z)$ the matrix of $ \overline{\phi}$.
Let us show that each orbit intersects the first coordinate axis not more than in 2 points.
Denote by $(x,0,\dots,0)=x\cdot e_1$, $x\ne 0$, one point from the intersection, and let $\overline{\phi}^n(x \cdot e_1)=
x \cdot \overline{\phi}^n(e_1)=x\cdot y\cdot e_1$ be another
intersection point.
Since $A^n$ as an element of 
$GL_k(\Z)$ has the first column $(r_1,\dots,r_k)$
such that $\gcd(r_1,\dots,r_k)=1$ (because
the expansion of the determinant by the first column
has the form $\pm 1=r_1\cdot R_1+\cdots +r_k\cdot R_k$).
Thus, $y=\pm 1$ and $xy=\pm x$.
\end{proof}

\section{The case of $\Z_p\rwr \Z^k$ for $p>3$}\label{sec:pge5}
A part of argument in this section will be close to some argument of 
\cite{gowon1}. Suppose now that $\G=\Z_p \rwr \Z^k$ for a general
prime $p$. We conserve the notation $\Sigma$ for the normal subgroup 
$\oplus \Z_p$.

Now $\delta_x$ is a generator of a subgroup $A_x$ isomorphic
to $\Z_p$, and $p\cdot \delta_x=0$.
Suppose
\begin{equation}\label{eq:phiotnulia_m}
\phi'(\delta_{0})=m_1 \delta_{x(1)}+\dots+m_n \delta_{x(n)}.
\end{equation} 
%Since $\phi'$ is an isomorphism, $(\gcd(m_1,\dots,m_n),m)=1$.
Then as above,
\begin{equation}\label{eq:phi_m}
\phi'(\delta_{x})=m_1 \delta_{\overline{\phi}(x)+ x(1)}+\dots+m_n \delta_{\overline{\phi}(x)+ x(n)}.
\end{equation}

First of all we need an analog of Lemma \ref{lem:odinobraz}.

In the general situation instead of the sets $T_j$
we need $(T_j,s_j\cdot \vec{m})$, which are some shifts 
with multiplication of
$(T,\vec{m})$, where $\vec{m}=(m_1,\dots,m_n)$.
Fortunately (and that is why we have restricted ourselves
to the prime order case) if $m\in\Z_p$, $m\neq 0$, then it
generates $\Z_p$. That is why the sum of
several elements with the same
$T_{j(1)}=\cdots=T_{j(r)}$ either has the same support 
$=T_{j(1)}$
and coefficients vector $(m_{j(1)}+\cdots+m_{j(r)})\vec{m}$,
or completely annihilates, when $m_{j(1)}+\cdots+m_{j(r)}=0\mod p$. 
  
So, after cancellations we may assume that all supports $T_j$
are distinct and repeat the remaining part of the proof of
Lemma  \ref{lem:odinobraz} and obtain

\begin{lem}\label{lem:odinobrazprime}
If $p$ is prime, one has 
$$
\phi'(\delta_0)=m\cdot \delta_{x_0}
$$
for some $x_0 \in\Z^k$ and $0\neq m\in \Z_p$. 
\end{lem}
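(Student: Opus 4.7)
The plan is to imitate the proof of Lemma \ref{lem:odinobraz}, upgrading the lexicographic-extremum argument to accommodate nonzero $\Z_p$-coefficients. The essential facilitator, as already hinted in the discussion preceding the statement, is that $\Z_p$ is a field for $p$ prime, so products of nonzero scalars remain nonzero.

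First I would set up the analog: assume by contradiction that $n\ge 2$ in the expression (\ref{eq:phiotnulia_m}), with $m_i\neq 0$ and distinct $x(i)$. Since $\phi'$ is an automorphism of $\Sigma$, there exists $h=s_1\delta_{r(1)}+\dots+s_t\delta_{r(t)}$ with $s_j\neq 0$ and distinct $r(j)$ such that $\phi'(h)=\delta_0$. Expanding by (\ref{eq:phi_m}), the summand $s_j\phi'(\delta_{r(j)})$ is supported on $T_j:=\overline{\phi}(r(j))+T$ (where $T=\{x(1),\dots,x(n)\}$) with coefficient vector $s_j\vec{m}$, $\vec{m}:=(m_1,\dots,m_n)$.

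Next I would regroup by equal supports: if $T_{j(1)}=\dots=T_{j(r)}$, the combined coefficient vector on this common support is $(s_{j(1)}+\dots+s_{j(r)})\vec{m}$, which either vanishes identically (the group completely annihilates) or remains a nonzero scalar multiple of $\vec{m}$. This step is exactly where primality of $p$ is crucial: a nonzero scalar times the all-nonzero vector $\vec{m}$ still has all entries nonzero in $\Z_p$. After all such reductions, I obtain pairwise distinct supports $T_{j_1},\dots,T_{j_\ell}$ with coefficient vectors $c_k\vec{m}$, $c_k\ne 0$, whose total sum equals $\delta_0$. Now I would invoke the lexicographic-extremum observation from Lemma \ref{lem:odinobraz}: for any choice of signed permutation $(\e_1\s_1,\dots,\e_k\s_k)$, the vertex $(\e_1\s_1,\dots,\e_k\s_k)[S]$ of $S:=T_{j_1}\cup\dots\cup T_{j_\ell}$ lies in exactly one $T_{j_k}$ and is not covered by any other. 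Hence the coefficient at that vertex in the total sum is $c_k$ times the entry of $\vec{m}$ at the corresponding extremal position of $T$, a product of two nonzero elements of $\Z_p$ and thus nonzero. So this extremal vertex survives in the support of $\delta_0=\{0\}$. If $n\ge 2$, then $T$ has at least two distinct lexicographic extrema (take, for example, the direction $(+1,+2,\dots,+k)$ and its opposite), forcing at least two distinct persistent points in the support of $\delta_0$ — a contradiction. Therefore $n=1$, and setting $x_0:=x(1)$, $m:=m_1$ yields the claim.

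The only genuine obstacle beyond the proof of Lemma \ref{lem:odinobraz} is the multiplicity bookkeeping: ensuring that the scalar coefficients at the surviving extremal vertices do not accidentally vanish after gathering contributions from different $T_j$'s. The field structure of $\Z_p$ handles this cleanly by collapsing each class of coincident supports into a single scalar multiple of $\vec{m}$, which is exactly why the argument is restricted to the prime-order case.
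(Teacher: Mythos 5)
Your proof is correct and follows essentially the same route as the paper: the paper likewise replaces the sets $T_j$ by pairs $(T_j,s_j\vec m)$, uses that a nonzero scalar multiple of $\vec m$ has all entries nonzero in $\Z_p$ (no zero divisors, since $p$ is prime) so that coincident supports either merge into a nonzero multiple of $\vec m$ or annihilate completely, and then repeats the lexicographic-vertex argument of Lemma \ref{lem:odinobraz} verbatim. Your write-up merely makes explicit the bookkeeping (and the observation that surviving opposite extremal vertices of $S$ are distinct once $n\ge 2$) that the paper leaves to the reader.
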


Let us note, that generally $m\ne 1$ in this situation. For
example, in $\Z_3$ one can take $s=m=2$ and $sm=4=1\mod 3$.

For other elements we have
\begin{equation}\label{eq:phi_m_mod}
\phi'(\delta_{x})=m \delta_{\overline{\phi}(x)+ x_0}.
\end{equation}

To calculate $R(\phi')$ we need to calculate the
index of the image of $(1-\phi')\Sigma$ in $\Sigma$.

Suppose first that $x_0=0$,
$$
\phi'(\delta_{x})=m \delta_{\overline{\phi}(x)}.
$$

Then for any $x$ we have $(1-\phi')$-invariant
subgroup
\begin{equation}\label{eq:invsubgroup}
\cdots \oplus A_{\overline{\phi}^{-1}(x)} \oplus
A_x\oplus A_{\overline{\phi}(x)} \oplus A_{\overline{\phi}^2(x)}
\oplus \cdots.
\end{equation}
In contrast with the case $k=1$ considered in \cite{gowon1},
the corresponding orbit of $\overline{\phi}$ can be infinite
or finite, but not necessary of length $2$. 

Let us note that since $\overline{\t_g \phi} =
\overline{\phi}$, they have the same orbit structure.

\begin{lem}\label{lem:infiniteforinfiniteorbit}
If an orbit is infinite, then the corresponding restriction
of $1-\phi'$ on the subgroup (\ref{eq:invsubgroup}) 
is not an epimorphism.
\end{lem}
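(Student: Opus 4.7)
The plan is to identify the invariant subgroup in (\ref{eq:invsubgroup}), call it $V$, with the Laurent polynomial algebra $\Z_p[t,t^{-1}]$ via the correspondence $\delta_{\overline{\phi}^n(x)}\leftrightarrow t^n$. Since the orbit is assumed to be infinite, the points $\overline{\phi}^n(x)$ for $n\in\Z$ are pairwise distinct, so the $A_{\overline{\phi}^n(x)}$ are independent summands of $\Sigma$ and this is a genuine isomorphism of $\Z_p$-vector spaces. Under the standing assumption $x_0=0$ of the paragraph preceding the lemma, formula (\ref{eq:phi_m_mod}) becomes $\phi'(\delta_{\overline{\phi}^n(x)})=m\delta_{\overline{\phi}^{n+1}(x)}$, so the restriction of $\phi'$ to $V$ corresponds to multiplication by $mt$, and that of $1-\phi'$ to multiplication by $1-mt$.

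To exhibit an element outside the image, I would use an evaluation homomorphism. Since $p$ is prime and $m\neq 0$ in $\Z_p$ by Lemma \ref{lem:odinobrazprime}, the element $m$ is a unit in the field $\Z_p$, so the substitution $t\mapsto m^{-1}$ defines a ring homomorphism $\mathrm{ev}\colon\Z_p[t,t^{-1}]\to\Z_p$. Then $\mathrm{ev}(1-mt)=1-m\cdot m^{-1}=0$, hence the whole image of multiplication by $1-mt$ lies in $\ker(\mathrm{ev})$. But $\delta_x$ corresponds to $t^0=1$, whose image under $\mathrm{ev}$ is $1\neq 0$; therefore $\delta_x\notin(1-\phi')(V)$, and the restriction of $1-\phi'$ is not surjective.

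I do not expect a serious obstacle. The only point requiring care is the $\Z_p$-linear independence of $\{\delta_{\overline{\phi}^n(x)}\}_{n\in\Z}$ in $V$, which is immediate from the definition of the restricted direct sum once the orbit is infinite. A polynomial-free repackaging is equally available: writing a putative preimage of $\delta_x$ as $h=\sum_n a_n\delta_{\overline{\phi}^n(x)}$, the equation $(1-\phi')(h)=\delta_x$ yields the recursion $a_n-ma_{n-1}=\delta_{n,0}$; starting from $a_n=0$ for $n\ll 0$, the recursion forces $a_n=m^n$ for all $n\geq 0$, and since no power of $m$ vanishes mod $p$, this contradicts the finiteness of the support of $h$.
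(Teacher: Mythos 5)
Your proof is correct, but your main route is genuinely different from the paper's. The paper argues directly on supports: writing an element of the subgroup (\ref{eq:invsubgroup}) as a finitely supported string $(\dots,0,a_1,\dots,a_r,0,\dots)$, it observes that $1-\phi'$ sends it to a string whose extreme nonzero entries are $a_1$ and $-ma_r$ (nonzero since $p$ is prime and $m\neq 0$), so the span of the support strictly increases and no element concentrated in a single summand, such as $\delta_x$, can be in the image. Your primary argument instead identifies the invariant subgroup with the Laurent polynomial ring $\Z_p[t,t^{-1}]$ (legitimate, since an infinite orbit of the bijection $\overline{\phi}$ has pairwise distinct points $\overline{\phi}^n(x)$), recognizes $1-\phi'$ as multiplication by $1-mt$, and kills the image with the evaluation $t\mapsto m^{-1}$, which exists exactly because $m\neq 0$ in the field $\Z_p$ (Lemma \ref{lem:odinobrazprime}); you also correctly invoke the standing reduction $x_0=0$ so that (\ref{eq:phi_m_mod}) gives the shift-by-one model. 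What the structural route buys is a cleaner conceptual reason and more information: the image lies in the kernel of a ring homomorphism onto $\Z_p$, so the cokernel is at least $\Z_p$, and the same module picture explains the finite-orbit case, where the ring becomes $\Z_p[t]/(t^s-1)$ and surjectivity of $1-mt$ is exactly the condition $1-m^s\not\equiv 0 \bmod p$ that the paper obtains from the determinant of (\ref{eq:matrix_for_fin}). Your ``polynomial-free repackaging'' via the recursion $a_n-ma_{n-1}=[n=0]$ is essentially the paper's argument read in reverse (a preimage of $\delta_x$ would need infinite support), so either version is acceptable.
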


\begin{proof}
Indeed, under the appropriate description,
$$
1-\phi': (\dots,0,0,a_1,a_2,\dots,a_r,0,0,\dots)
\mapsto (\dots,0,-ma_1,a_1-ma_2,a_2-ma_3,\dots, a_r,0,0,\dots)
$$
If $a_1\neq 0$ and $a_r\neq 0$, then $-ma_1\neq 0$ and
the length of non-trivial part increases. Thus, elements
concentrated in one summand, e.g. $\delta_x$, are not in the image.
\end{proof}

If the orbit is finite of length $s$, the matrix of $(1-\phi')$
has the form 
\begin{equation}\label{eq:matrix_for_fin}
E-M=\left(\begin{array}{cccccc}
1   &0  &\cdots &                    &0  & -m \\
-m & 1 &  0&                     &  & 0  \\ 
0  & -m&  1 &  0&                        \\
0  & 0 &  -m & 1&            \ddots      &\vdots  \\
\vdots & &\ddots & \ddots & \ddots   & 0   \\
0      &\cdots & \cdots &                    0  &  -m & 1
\end{array}
\right)
\end{equation}
and is an epimorphism if and only if its determinant  
is not zero (for prime $p$):
\begin{equation}
\det (E-M) = 1-m^s\not\equiv 0\mod p.
\end{equation}

\begin{lem}\label{lem:inf_many_orbits}
For a  non-trivial orbit of $\overline{\phi}:\Z^k\to \Z^k$, 
there exist infinitely many orbits of the same cardinality
(finite or infinite).

If $R(\overline{\phi})<\infty$ there is a unique trivial orbit.
\end{lem}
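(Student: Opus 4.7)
The plan is to handle the two claims of the lemma in turn. For the second claim, I would apply Lemma~\ref{lem:modjab} directly: the fixed-point set $C(\overline{\phi})$ is a subgroup of the torsion-free group $\Z^k$, and if $R(\overline{\phi})<\infty$ then Lemma~\ref{lem:modjab} forces $|C(\overline{\phi})|<\infty$. A finite torsion-free abelian group must be trivial, so $0$ is the unique fixed point of $\overline{\phi}$, i.e.\ the unique trivial orbit.

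For the first claim, fix a non-trivial orbit, pick a representative $x\ne 0$, and let $s\in\N\cup\{\infty\}$ denote its cardinality. The preliminary observation is that for every $n\in\Z$, $n\ne 0$, the $\overline{\phi}$-orbit of $nx$ again has cardinality exactly $s$: indeed, $A^j(nx)=nx$ is equivalent to $n(A^jx-x)=0$, which by torsion-freeness of $\Z^k$ is the same as $A^jx=x$, so the smallest positive $j$ realizing this (or its non-existence) is the same for $x$ and for $nx$. It therefore suffices to show that the orbits $O_n$ of $nx$, $n\in\Z_{>0}$, represent infinitely many distinct orbits of $\overline{\phi}$.

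When $s<\infty$, this is immediate from a cardinality count: the $O_n$ together cover the infinite set $\{nx:n\ge 1\}$, while each $O_n$ has only $s$ points. The substantive case is $s=\infty$, where I would argue by contradiction. Assuming the $O_n$ fall into finitely many orbits, pigeonhole produces one orbit $O$ and an infinite sequence of distinct positive integers $n_\ell$ with $n_\ell x\in O$ for every $\ell$. Fixing $y\in O$ and writing $n_\ell x=A^{j_\ell}y$, comparison of two indices yields $n_\ell x=n_1 A^{j_\ell-j_1}x$. Torsion-freeness rules out $j_\ell=j_1$ (which would force $n_\ell=n_1$) for all but one index, so for some $d\ne 0$ one has $A^dx=(n_\ell/n_1)x$ in $\mathbb Q^k$.

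The main obstacle is to dispose of this last eventuality. Writing the ratio as $a/b$ in lowest terms and iterating gives $(a/b)^n x\in\Z^k$ for every $n\in\Z$; picking a coordinate $x_i\ne 0$ of $x$ and requiring that $b^n\mid x_i$ for $n\ge 0$ and $a^{-n}\mid x_i$ for $n\le 0$ forces $a,b\in\{\pm 1\}$. Hence $A^dx=\pm x$ and $A^{2d}x=x$, contradicting $s=\infty$. Outside this integrality step the argument is pure bookkeeping, resting on torsion-freeness of $\Z^k$ and a cardinality count.
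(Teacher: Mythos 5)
Your proof is correct, and for the main claim it takes a genuinely different route from the paper. The paper also looks at the integer multiples $i\cdot x$ of a point $x=(r_1,\dots,r_k)\neq 0$ of the given orbit, but it separates their orbits in one line: since $\overline{\phi}$ and $\overline{\phi}^{-1}$ are integral matrices, $\gcd(r_1,\dots,r_k)$ is constant along each orbit, so multiples with different $\gcd$'s lie in pairwise distinct orbits of the same cardinality --- a uniform argument that does not distinguish between finite and infinite orbits. You avoid the $\gcd$ invariant altogether: in the finite case you get infinitely many distinct orbits among the $O_n$ by a cardinality count, and in the infinite case by pigeonhole plus the integrality argument showing that $A^d x=(a/b)x$ with $a/b$ in lowest terms forces $a/b=\pm 1$ and hence a finite orbit, a contradiction. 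Both arguments are sound; the paper's invariant buys brevity and even gives that \emph{all} the multiples lie in distinct orbits, while your version is more elementary in that it only uses torsion-freeness of $\Z^k$ and the divisibility argument, at the cost of a case split and a longer contradiction step. For the second claim your argument (finiteness of $C(\overline{\phi})$ from Lemma~\ref{lem:modjab} plus torsion-freeness) is essentially the paper's, which phrases the same point by noting that a non-zero fixed point would generate infinitely many fixed points.
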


\begin{proof}
Let $(r_1,\dots,r_k)\neq 0$ be a point of the orbit. 
Since $\overline{\phi}$ as an element of 
$GL(\Z,k)$ preserves $\gcd(r_1,\dots,r_k)$, the elements
$(i\cdot r_1,\dots,i\cdot r_k)$, $i\in\Z$, $i>2$, 
belong to distinct orbits of the same cardinality.

If there is a non-zero fixed point $x_*$ of $\phi'$, then there is
an infinite series $\{s\cdot x_*\}$, $s\in \Z$, of fixed points.
By Lemma \ref{lem:modjab} this contradicts to 
$R(\overline{\phi})<\infty$.
\end{proof}

\begin{lem}\label{lem:reidem_num_1_or_inf}
If $\overline{\phi}$ has an infinite orbit, $R(\phi')=\infty$
and $R(\phi)=\infty$.

If $\overline{\phi}$ has only finite orbits, there are two
possibilities:
\begin{enumerate}[\rm 1)]
\item $R(\phi')=R(\phi)=\infty$. This occurs if, at least
for one orbit, the corresponding restriction of $1-\phi'$
is not an epimorphism.
\item $R(\phi')=1$. This occurs if, for all orbits, 
the corresponding restriction of $1-\phi'$
is an epimorphism. 
\end{enumerate}
If we have one of this cases for $\phi'$, then the same is
true for all $\t_g\circ \phi'$.
\end{lem}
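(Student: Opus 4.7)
The plan is to exploit that $\Sigma$ is abelian: by Lemma~\ref{lem:abelcase}, $R(\phi')=[\Sigma:(1-\phi')\Sigma]$, so the entire task reduces to computing this cokernel. Under the standing reduction $x_0=0$, the decomposition
$$
\Sigma=\bigoplus_O\Sigma_O,\qquad \Sigma_O:=\bigoplus_{y\in O}A_y,
$$
indexed by the $\overline{\phi}$-orbits $O\subset\Z^k$, is $\phi'$-invariant, so $\Sigma/(1-\phi')\Sigma\cong\bigoplus_O\Sigma_O/(1-\phi')\Sigma_O$. The key observation is that, once $x_0=0$, the action of $1-\phi'$ on $\Sigma_O$ is determined up to $\Z_p$-module isomorphism by the pair $(|O|,m)$: on a finite orbit of length $s$ it is the matrix $E-M$ of~(\ref{eq:matrix_for_fin}), and on an infinite orbit it is the bi-infinite shift $(a_i)\mapsto(a_i-ma_{i-1})$ on $\bigoplus_{\Z}\Z_p$.

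For case~1 I would start from any infinite orbit $O_0$: Lemma~\ref{lem:infiniteforinfiniteorbit} already shows $\Sigma_{O_0}/(1-\phi')\Sigma_{O_0}\ne 0$, and Lemma~\ref{lem:inf_many_orbits} then produces infinitely many further orbits of infinite cardinality, each contributing an isomorphic non-trivial local cokernel, so the total cokernel is infinite and $R(\phi')=\infty$. Case~2a is entirely parallel with the matrix $E-M$: if $\det(E-M)=1-m^s\equiv 0\pmod{p}$ on an orbit of length $s$, then Lemma~\ref{lem:inf_many_orbits} furnishes infinitely many orbits of that same length $s$, each giving the same non-trivial local cokernel. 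Case~2b is immediate: every local $1-\phi'$ is surjective, hence so is $1-\phi'$ on $\Sigma$, and $R(\phi')=1$.

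To propagate $R(\phi')=\infty$ to $R(\phi)=\infty$ in cases~1 and~2a I argue by contradiction: if $R(\phi)<\infty$, then $R(\overline\phi)\le R(\phi)<\infty$ by Proposition~\ref{prop:epim}, hence $|C(\overline\phi)|<\infty$ by Lemma~\ref{lem:modjab}, and Lemma~\ref{lem:cherezfixed} yields $R(\phi')\le R(\phi)\cdot|C(\overline\phi)|<\infty$, contradicting the established infinity. The final invariance statement is the delicate point: since the trichotomy depends only on the orbit structure of $\overline\phi$ on $\Z^k$ and on the scalar $m\in\Z_p\setminus\{0\}$, and since $\overline{\tau_g\phi}=\overline\phi$ while the computation $(\tau_g\phi)'(\delta_0)=\alpha(\overline g)(m\delta_{x_0})=m\delta_{x_0+\overline g}$ preserves the coefficient $m$, both invariants are unaffected by composition with any inner automorphism; after performing the inner-automorphism reduction that brings the shifted $x_0+\overline g$ back to $0$, the restriction of $\tau_g\phi$ to $\Sigma$ is again the $\phi'$ analysed above and falls under the same case. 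The main pitfall to avoid is conflating the unreduced operator $\tau_g\circ\phi'$ (which a priori has $x_0=\overline g\ne 0$ and a different affine orbit structure) with its reduced form---one must first perform the reduction, after which the case analysis applies verbatim.
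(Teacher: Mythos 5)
Your treatment of the fixed automorphism with $x_0=0$ (the orbit decomposition of $\Sigma$, the local cokernels, the three cases, and the contradiction argument via Proposition~\ref{prop:epim}, Lemma~\ref{lem:modjab} and Lemma~\ref{lem:cherezfixed} to pass from $R(\phi')=\infty$ to $R(\phi)=\infty$) is correct and essentially the paper's. The genuine gap is in the last claim, the invariance under $\t_g$, which is where the real work of the paper's proof lies. Your justification is: conjugate by an inner automorphism to bring $x_0+\overline g$ back to $0$, ``after which the case analysis applies verbatim.'' But Lemma~\ref{lem:innereqrei} preserves the Reidemeister number of an automorphism of the whole group $\G$; it says nothing about the restriction to $\Sigma$. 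The restriction of $\t_w$ to the abelian group $\Sigma$ is $\a(w)$, which is \emph{not} an inner automorphism of $\Sigma$, so $R(\a(w)\circ\psi')$ and $R(\psi')$ are not a priori related, and the statement to be proved is precisely that $R(\t_g\circ\phi')$ falls in the same case for all $g$. The remedy you propose (``first perform the reduction'') is exactly the conflation you flag as the pitfall: after the reduction you are computing the Reidemeister number of a \emph{different} automorphism of $\Sigma$.

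Concretely, for $\t_g\circ\phi'$ with $x_0\neq 0$ the invariant subgroups of $\Sigma$ are not spanned by $\overline\phi$-orbits but by orbits of the affine map $x\mapsto\overline\phi(x)+x_0$ (formula (\ref{eq:formofgenorb})), whose lengths $r'$ are in general different from the $\overline\phi$-orbit lengths; so the epimorphy criterion becomes $1-m^{r'}\not\equiv 0 \bmod p$ for a possibly different set of exponents, and it is not obvious that this condition holds or fails simultaneously with the $x_0=0$ condition. The paper closes this gap by an explicit analysis: using $R(\overline\phi)<\infty$ to show $\overline{\phi}^{t-1}(x_0)+\cdots+x_0=0$ (equation (\ref{eq:prop_of_t})), deducing that every affine orbit length divides $r=\lcm(s,t)$, producing via the points $jx_1$ ($j$ large) infinitely many affine orbits of length exactly $r$, and then using the divisibility of $1-m^{r'}$ into $1-m^{r}$ to transfer epimorphy to all divisor lengths. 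None of this appears in your proposal, so the final statement of the lemma is unproved as written. (A smaller point: in your case 2a, if the failing orbit is the trivial one, i.e.\ $m=1$, Lemma~\ref{lem:inf_many_orbits} does not supply infinitely many orbits of length $1$; you should instead note that $m=1$ makes the restriction fail on every finite orbit.)
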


\begin{proof}
Let us note that $\t_g = \t_{g'}:\Sigma\to\Sigma$
if $g^{-1}g'\in \Sigma$, so all automorphisms
$\t_g\circ\phi':\Sigma\to\Sigma$ are described by $g\in \Z^k$.
In this case $\t_g \circ\phi'=\a(g)\circ \phi'$
and the sizes of above invariant groups are the same as for
$\phi'$.

As it was explained above all possible automorphisms
$\phi'$ corresponding to a given $\overline{\phi}=
\overline{\t_g\circ \phi}$ are completely
defined by $m$ and $x_0$ such that 
$\phi'(\delta_0)=m\cdot\delta_{x_0}$, and each pair $(m,x_0)$
defines some $\phi'$ and $\phi$. The relation
$$
\t_{y_0} \circ\phi'(\delta_0) =\a(y_0)\circ \phi'(\delta_0)
= m\cdot\delta_{y_0+x_0}
$$
shows that all automorphisms of $\Sigma$ with the same $m$
differ from each other by an appropriate $\t_{y_0}$
and vice versa.

Hence, if $\overline{\phi}$ has an infinite orbit
and we take for $\phi$, some $g\in \Z^k$ 
such that $\t_g \circ \phi'(\delta_0)=m\cdot\delta_0$,
then there exists an appropriate invariant subgroup of $\phi'$ 
(over this orbit) with a non-epimorphic
restriction of $1-\t_g \circ \phi'$ (Lemma \ref{lem:infiniteforinfiniteorbit}). Since we have
infinitely many such orbits (Lemma \ref{lem:inf_many_orbits})
then $R(\phi')=\infty$ and $R(\phi)=\infty$. 

Now we will describe the matrix form 
of restrictions onto invariant subgroups of
an arbitrary $\phi'$ (i.e. not necessary $x_0=0$)
for the case of finite orbits
of $\overline{\phi}$.
Suppose, $\phi'(\delta_0)=m\delta(x_0)$, $s$ is the
length of the orbit $x_1,\overline{\phi}(x_1),\dots,\overline{\phi}^{s-1}(x_1)$, 
$t$ is the
length of the orbit $x_0,\overline{\phi}(x_0),\dots,\overline{\phi}^{t-1}(x_0)$, 
we have
$$
\phi'(\delta_{x_1})=\alpha(\overline{\phi}(x_1)) \phi'(\delta_0)=
m\cdot \delta_{\overline{\phi}(x_1)+x_0},
$$
$$
\phi'(\delta_{\overline{\phi}(x_1)+x_0})=\alpha(\overline{\phi}(\overline{\phi}(x_1)+x_0)) \phi'(\delta_0)=
m\cdot \delta_{\overline{\phi}^2(x_1)+\overline{\phi}(x_0)+x_0},
$$
$$
\phi'(\delta_{\overline{\phi}^2(x_1)+\overline{\phi}(x_0)+x_0})=\alpha(\overline{\phi}(\overline{\phi}^2(x_1)+\overline{\phi}(x_0)+x_0)) \phi'(\delta_0)=
m\cdot \delta_{\overline{\phi}^3(x_1)+\overline{\phi}^2(x_0)+\overline{\phi}(x_0)+x_0},\dots
$$
In order to estimate the length $r$ of the underlying orbit
\begin{equation}\label{eq:formofgenorb}
x_1 \mapsto \overline{\phi}(x_1)+x_0 \mapsto
\overline{\phi}^2(x_1)+\overline{\phi}(x_0)+x_0
\mapsto \overline{\phi}^3(x_1)+\overline{\phi}^2(x_0)+\overline{\phi}(x_0)+x_0 \mapsto \cdots
\end{equation}
remark that 
\begin{equation}\label{eq:prop_of_t}
y:=\overline{\phi}^{t-1}(x_0)+\overline{\phi}^{t-2}(x_0)+\cdots +x_0=0.
\end{equation}
Indeed,
$$
\overline{\phi}(y)-y= \overline{\phi}^{t}(x_0)-x_0=0.
$$
Hence, if $y\ne 0$, we have a non-trivial fixed point for 
$\overline{\phi}$. This contradicts $R(\overline{\phi})<\infty$
(as in the proof of Lemma \ref{lem:inf_many_orbits}). 

Equality (\ref{eq:prop_of_t}) and the definition of $s$
imply
\begin{equation}\label{eq:gener_orbit}
x_1=\overline{\phi}^r(x_1)+\overline{\phi}^{r-1}(x_0)+\cdots +\overline{\phi}(x_0)+x_0, \mbox{ where }r=\lcm(s,t).
\end{equation}
 
Suppose now that $\overline{\phi}$ does not have infinite orbits.
Then the length of any orbit is bounded by
$\lcm(l_1,\dots,l_k)$, where $l_i$ is the length of the orbit of
the element $e_i$ of the standard base. More precisely, any
orbit length is a divisor of $\lcm(l_1,\dots,l_k)$.
In particular, for any $s$ and $t$, as above,
$r=\lcm(t,s)$ is a divisor of $\lcm(l_1,\dots,l_k)$.
Hence, by (\ref{eq:gener_orbit}) the length $r'$ of 
(\ref{eq:formofgenorb}) is some divisor of $r$ and so of
$\lcm(l_1,\dots,l_k)$.

If the underlying orbit (\ref{eq:formofgenorb}) 
does not start from the point $x_1=0$, one
can obtain infinitely many underlying orbits 
by multiplying $x_1$ by different positive integers $j=1,2,$. 
More precisely, some of them can coincide, but for a sufficiently
large $j$, the point $j x_1$ will not be an element of the orbit 
(\ref{eq:formofgenorb}), etc. So, there is infinitely many
distinct orbits. Moreover, we can find a sufficiently large $J$ 
such that for any $j\ge J$, the distances between
$j x_1$ and $\overline{\phi}(j \cdot x_1)= j\cdot\overline{\phi}(x_1)$
are more than $x_0$, $\overline{\phi}(x_0)+x_0$, \dots
$\overline{\phi}^{t-1}(x_0)+\cdots +\overline{\phi}(x_0)+x_0$.
Evidently, for these orbits the length $r'=r$ (not only a divisor
of $r$). So among the orbits starting in $x_1$, $2x_1$, \dots,
we have infinitely many orbits of length $r$ and
some finite number $< J$  of orbits of some length dividing $r$. 
Then the Reidemeister number of the restriction on the subgroup
over the union of these orbits is finite if and only if
$1-m^r \not\equiv 0\mod p$. But then for any divisor $r'$ of $r$
we have, for $r'':=r/r'$,
$$
1-m^r = (1-m^{r'})(1+m^{r'}+m^{2r'}+\cdots +m^{r'-1})\not\equiv 0\mod p.
$$
Hence,  $1-m^{r'} \not\equiv 0\mod p$. Thus, $1-\phi'$ is an
epimorphism over the orbits from the above finite series too.
In particular, for the ``initial'' orbit (\ref{eq:formofgenorb}).

It remains to discuss the case of $x_1=0$. 
In this case the length of the orbit is $t$. Considering  
some $x_1\ne 0$ with the length of $\overline{\phi}$-orbit equal to
some $s\ne 0$ we arrive as above to  
$1-m^r \not\equiv 0\mod p$, where $r=\lcm(s,t)$.
Since $t$ divides
$r=\lcm(s,t)$, we obtain $1-m^t \not\equiv 0\mod p$ 
similarly to the case of $r'$ above. Thus $1-\phi'$
is an epimorphism over this orbit too. 
Hence, it is an epimorphism in entire $\Sigma$.

As it was explained in the beginning of the proof,
to vary $\phi'$ is the same as to consider various
$\t_g\circ \phi'$ for a fixed $\phi'$. 
This completes the proof.
\end{proof}

\begin{teo}\label{teo:r_infty_and_not}
If $p>3$, we can find an authomorphism with finite Reidemeister
number. Thus,
$\Z_p \rwr \Z^k$ does not have the property $R_\infty$ if $p>3$.
\end{teo}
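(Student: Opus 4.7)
The plan is to exhibit a concrete automorphism $\phi$ that falls into case (2) of Lemma \ref{lem:reidem_num_1_or_inf}, and then to use Lemma \ref{lem:nessuffinRei} to pass from finiteness of $R(\overline{\phi})$ and of all $R(\t_g\phi')$ to finiteness of $R(\phi)$. I take the simplest downstairs automorphism, $\overline{\phi}=-\Id$ on $\Z^k$: all its orbits have length at most $2$, hence are finite, and $1$ is not an eigenvalue of $-I$, so $R(\overline{\phi})=|\det(I+I)|=2^k<\infty$. I set $x_0=0$ and pick $m\in\Z_p^*$ with $m\not\equiv\pm 1\pmod p$. Such an $m$ exists precisely when $p>3$, since $|\Z_p^*\setminus\{\pm1\}|=p-3\ge 2$; for instance $m=2$ always works.

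Next I lift these data to an automorphism $\phi$ of $\G=\Z_p\rwr\Z^k$ by declaring $\phi(e_i):=-e_i$ on the standard generators of $\Z^k$ and $\phi(\delta_0):=m\cdot\delta_0$. I would check that $\phi$ extends consistently to all of $\G$ by verifying the one non-trivial defining relation of the semidirect product, $e_i\delta_0 e_i^{-1}=\delta_{e_i}$, under $\phi$: this is precisely the compatibility (\ref{eq:relationforautom}) and is a short direct computation. The inverse automorphism is built the same way using $-e_i$ and $m^{-1}\bmod p$.

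With $x_0=0$ and $\overline{\phi}=-\Id$, the ``underlying orbit'' (\ref{eq:formofgenorb}) attached to any $x_1\in\Z^k$ collapses to the ordinary $\overline{\phi}$-orbit of $x_1$, so $r=\lcm(s,t)=s\in\{1,2\}$ in the notation of (\ref{eq:gener_orbit}). The epimorphism criterion $\det(E-M)=1-m^r\not\equiv 0\pmod p$ coming from the matrix (\ref{eq:matrix_for_fin}) therefore reduces to the two conditions $m\not\equiv 1$ and $m^2\not\equiv 1\pmod p$, both of which hold by our choice of $m$. Consequently we are in case (2) of Lemma \ref{lem:reidem_num_1_or_inf}, yielding $R(\t_g\phi')=1$ for every $g\in\Z^k$. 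Combined with $R(\overline{\phi})=2^k<\infty$, Lemma \ref{lem:nessuffinRei} delivers $R(\phi)<\infty$.

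The main obstacle is conceptual rather than technical: one has to notice that the general epimorphism criterion of Lemma \ref{lem:reidem_num_1_or_inf} collapses, for this choice of $\overline{\phi}$, to the single mod-$p$ condition $m^2\not\equiv 1$, and that this admits a solution exactly when $\Z_p^*\setminus\{\pm 1\}\ne\varnothing$, i.e.\ precisely for $p>3$. The borderline $p=3$ (where $\Z_3^*=\{\pm1\}$) is exactly where the construction fails, matching the contrasting $R_\infty$ behaviour of $\Z_3\rwr\Z^{2d}$ announced in the introduction.
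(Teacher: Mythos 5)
Your proposal is correct and follows essentially the same route as the paper: the author also takes $\overline{\phi}=-\Id$, $x_0=0$, and $m=2$ (or any $m$ with $1-m^2\not\equiv 0 \bmod p$), and concludes via case (2) of Lemma \ref{lem:reidem_num_1_or_inf}. Your extra details (explicitly lifting $(\overline{\phi},\phi')$ to an automorphism of $\G$ and invoking Lemma \ref{lem:nessuffinRei} for the final step) merely spell out what the paper leaves implicit in Lemma \ref{lem:reidem_num_1_or_inf} and its proof.
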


\begin{proof}
For $p>3$ consider $\overline{\phi}=-\Id$. Then
$R(\overline{\phi})=2^k$ and all non-trivial orbits are
of length $2$. Define $\phi'$ by $x_0=0$ and some non-zero
$m\in \Z_p$,
satisfying $1-m^2\not\equiv 0\mod p$.
Take e.g. $m=2$ (cf. \cite[p. 879]{gowon1}). Then
$1-m^2=-3\not\equiv 0\mod p$ for any prime $p>3$.
\end{proof}

The intermediate case of $p=3$ will be studied in the next
section and the answer will depend on parity of $k$.

\section{The case of $\Z_3\rwr \Z^k$}\label{sec:case_p3}

\begin{teo}\label{teo:casep3}
The group $\G=\Z_3\rwr \Z^k$ has the property $R_\infty$
for odd $k$ and does not have 
the property $R_\infty$ for even $k$.
\end{teo}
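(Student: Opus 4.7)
The plan is to apply Lemma \ref{lem:reidem_num_1_or_inf} and exploit the arithmetic of $\Z_3$. Since $2^2 \equiv 1 \pmod 3$, the quantity $1 - m^r$ (which governs whether $1-\phi'$ restricted to an invariant cyclic subgroup over an underlying orbit of length $r$ is an epimorphism) is nonzero mod $3$ iff $m = 2$ and $r$ is odd. Since $m \in \{1,2\}$, a finite Reidemeister number forces $m = 2$ and every underlying orbit length to be odd, which by (\ref{eq:gener_orbit}) reduces to every $\overline{\phi}$-orbit on $\Z^k$ having odd length.

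For the even case $k = 2d$, I construct an explicit example. Let $B = \Mat{0}{-1}{1}{-1}\in GL_2(\Z)$; then $B^3 = I$ and $\det(I-B) = 3$. Take $A \in GL_{2d}(\Z)$ block-diagonal with $d$ copies of $B$, and define $\phi$ as the automorphism of $\G$ acting as $A$ on the $\Z^{2d}$-factor and sending $\delta_x$ to $2\delta_{Ax}$ (the composition of the automorphism induced by $A$ with the automorphism of $\Sigma$ multiplying every $\delta_x$ by $2\in \Z_3^\times$). The $A$-orbits on $\Z^{2d}$ have length $1$ or $3$, with $\{0\}$ being the only length-$1$ orbit (since $1$ is not an eigenvalue of $A$). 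The restriction of $1-\phi'$ has determinant $1 - 2^3 = -7 \equiv 2 \pmod 3$ on every length-$3$ orbit and $-1$ on $A_0$, both nonzero, so case (2) of Lemma \ref{lem:reidem_num_1_or_inf} gives $R(\phi') = 1$. Combined with $R(\overline{\phi}) = 3^d$, Lemma \ref{lem:nessuffinRei} yields $R(\phi) < \infty$.

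For the odd case $k = 2d+1$, I argue by contradiction. Assume $R(\phi) < \infty$; Proposition \ref{prop:epim} and Lemmas \ref{lem:modjab}, \ref{lem:cherezfixed} give $R(\overline{\phi}) < \infty$, $|C(\overline{\phi})| < \infty$ and $R(\phi') < \infty$, and in particular $+1$ is not an eigenvalue of $A := \overline{\phi}$. First, $A$ must have finite order: otherwise the increasing chain of $\mathbb{Q}$-subspaces $\ker(A^n - I)\otimes\mathbb{Q}$ of $\mathbb{Q}^k$ stabilizes at a proper subspace (since $A^n \neq I$ forces $\ker(A^n - I)_\mathbb{Q} \neq \mathbb{Q}^k$), producing some $v \in \Z^k$ with infinite $A$-orbit, and Lemma \ref{lem:reidem_num_1_or_inf} gives $R(\phi') = \infty$. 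So $A$ has finite order, and its eigenvalues are roots of unity. Since its characteristic polynomial has odd degree with integer coefficients, it has a real root, which must be $-1$ (as $+1$ is excluded). Clearing denominators in $\ker(A+I)_\mathbb{Q}$ produces $v \in \Z^k \setminus \{0\}$ with $Av = -v$, giving a length-$2$ $\overline{\phi}$-orbit. By (\ref{eq:gener_orbit}) the corresponding underlying orbit has length $r = \lcm(2,t)$, which is even, and the restriction of $1-\phi'$ on the associated invariant subgroup has determinant $1 - m^{r} \equiv 0 \pmod 3$ for any $m \in \{1,2\}$. This contradicts $R(\phi') < \infty$ via Lemma \ref{lem:reidem_num_1_or_inf}.

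The main obstacle is the linear-algebraic step in the odd case: deducing from $R(\overline{\phi}) < \infty$ and odd $k$ the existence of a lattice $-1$-eigenvector. Note that $R(\overline{\phi}) < \infty$ does not force finite order of $A$ (e.g. $\Mat{2}{1}{1}{1}$ has infinite order but $\det(I-A) = -1$), so the infinite-order case must be handled separately via the stabilization of the chain $\ker(A^n-I)_\mathbb{Q}$. Then the odd parity of $\deg \chi_A$, combined with the exclusion of $+1$ as an eigenvalue, is what forces the $-1$ eigenvalue in odd dimension; descending to $\Z^k$ by clearing denominators supplies the required lattice vector whose length-$2$ orbit defeats the parity condition identified in paragraph one.
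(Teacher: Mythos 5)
Your route is essentially the paper's: the same mod-$3$ dichotomy ($1-m^r\not\equiv 0\bmod 3$ iff $m=2$ and $r$ odd), the same block sum of an order-three matrix for even $k$ (the paper uses $\Mat 01{-1}{-1}$, conjugate to your $B$), and for odd $k$ the same argument that a finite-order integer matrix of odd size without eigenvalue $+1$ must have eigenvalue $-1$, hence a lattice vector with $\overline{\phi}(v)=-v$. Your even case is sound: all $\overline{\phi}$-orbits have odd length $1$ or $3$, so by (\ref{eq:gener_orbit}) all underlying orbits have odd length, $1-\phi'$ is onto everywhere, and the last clause of Lemma \ref{lem:reidem_num_1_or_inf} (the statement for all $\t_g\circ\phi'$) is what licenses the appeal to Lemma \ref{lem:nessuffinRei}. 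Your reduction to finite order is also fine in substance (the kernels $\ker(A^n-I)_{\mathbb Q}$ are directed by divisibility rather than an honest increasing chain, but the union argument works); the paper gets $M^r=E$ more directly from finiteness of the orbits of the standard basis vectors.

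There is, however, a genuine gap at the end of the odd case: from the length-two $\overline{\phi}$-orbit $\{v,-v\}$ you assert that ``by (\ref{eq:gener_orbit}) the corresponding underlying orbit has length $r=\lcm(2,t)$.'' Equation (\ref{eq:gener_orbit}) only shows that the length of the underlying orbit (\ref{eq:formofgenorb}) \emph{divides} $\lcm(2,t)$, and it can be a proper odd divisor: for the affine map $\psi(x)=\overline{\phi}(x)+x_0$ one has $\psi^{r'}(v)=\overline{\phi}^{r'}(v)+\overline{\phi}^{\,r'-1}(x_0)+\cdots+x_0=-v+\bigl(\overline{\phi}^{\,r'-1}(x_0)+\cdots+x_0\bigr)$ for odd $r'$, so if the drift sum happens to equal $2v$ the underlying orbit of $v$ itself has odd length, the corresponding restriction of $1-\phi'$ is an epimorphism, and no contradiction comes from this single orbit. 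The same one-sided use of (\ref{eq:gener_orbit}) hides in your opening claim that oddness of all underlying orbits ``reduces to'' oddness of all $\overline{\phi}$-orbits; only the direction you use in the even case is immediate. The repair is the scaling trick from the proof of Lemma \ref{lem:reidem_num_1_or_inf}, which is exactly what the paper invokes at this point: pass to $jv$ for large $j$; since the drift sums take only finitely many values as the exponent ranges over divisors of $\lcm(2,t)$, for all but finitely many $j$ the underlying orbit of $jv$ has length exactly $\lcm(2,t)$, which is even, so there are (infinitely many) orbits on which $1-\phi'$ is not onto and $R(\phi')=\infty$, giving the desired contradiction.
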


\begin{proof}
By Lemma \ref{lem:infiniteforinfiniteorbit}, $R(\phi)$ may be
finite only if all orbits of $\overline{\phi}$ are finite.

First, note that in this case $m$ can be equal to $1$ or $2$.

If $m=1$, then $1-m^s\equiv 0\mod 3$ not depending on 
the length $s$ of the corresponding orbit. Keeping in mind
the argument from the previous section, in particular,
Lemma \ref{lem:inf_many_orbits}, we obtain $R(\phi)=\infty$.

If $m=2$, then $1-m^r\equiv 0\mod 3$ for even $r$ and
$1-m^s\not\equiv 0\mod 3$ for odd $r$.

Denote by $M\in GL(k,\Z)$ the matrix of $\overline{\phi}$.
Since all orbits are finite, in particular, the orbits of
the elements of the standard base, we have $M^r=E$ for some $r\in \Z$, $r>1$ (see the previous section for more detail).

If $k=2d+1$, then $\det(M-\lambda E)=0$ has at least
one real solution $\lambda_0$. It must be a root of 1 of degree $r$.
Thus, $\lambda_0=\pm 1$. If $\lambda_0=1$, then $\det(E-M)=0$ and
$R(\overline{\phi})=\infty$. If $\lambda_0=-1$, then $r$ is even,
and $\overline{\phi}$ has an orbit of even length $s=2m$. 
Let $t$ be the length of the $\overline{\phi}$-orbit of $x_0$,
where $\phi'(\delta_0)=\delta_{x_0}$. 
Then, as in the proof of Lemma \ref{lem:reidem_num_1_or_inf},
we can detect infinitely many orbits of length $r=\lcm(s,t)$.
Since $r$ is even, the restriction of $1-\phi'$ on the
subgroup, related the underlying orbit of the form 
(\ref{eq:formofgenorb}) is not an epimorphism.
Since there is infinitely many such orbits, $R(\phi')=\infty$.
Then $R(\phi)=\infty$, as above. 

If $k=2$, consider $M$ to be the generator $\Mat 0 1 {-1} {-1}$
of a subgroup in $GL(2,\Z)$ isomorphic to $\Z_3$ (see, e.g.
\cite[p.~179]{Newman1972book}). For this $M$, $R(\overline{\phi})=
\det(E-M)=3$. It has only orbits of length 3 (except of the trivial
one). The same is true for $\underbrace{M\oplus\cdots\oplus M}_{d}$
in $\Z^{2d}$ with $R(\overline{\phi})=3^d$.
Then the lengths of all underlying orbits are some powers of $3$ 
(except maybe of the trivial one) and $1-\phi'$ is an epimorphism
(for $\phi$ defined by this $\overline{\phi}$, $m=2$, 
and arbitrary $x_0$).
As in the previous section, this means that all $\t_g\circ\phi'$
have $R(\t_g\circ\phi')=1$ and $R(\phi)=R(\overline{\phi})<\infty$.
\end{proof}

\begin{rk}
If $k=1$, in particular, is odd, we obtain the $R_\infty$
property for $\Z_3\rwr \Z$. This is a particular case of
\cite{gowon1}.
\end{rk}

\section{Twisted Burnside-Frobenius theorem}\label{sec:TBFT}

\begin{teo}\label{teo:TBFTforlamplightertype}
Suppose, $\G$ is $\Z_m \rwr \Z$, where $m$ is relatively prime
to $6$,
or $\Z_p \rwr \Z^k$ for a prime $p>3$ and an arbitrary $k$,
or $\Z_3 \rwr \Z^{2d}$. 
Then the twisted
Burnside-Frobenius theorem is true for $\G$.
\end{teo}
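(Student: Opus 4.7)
The theorem splits by case. For $\G=\Z_3\rwr\Z^{2d}$, Theorem~\ref{teo:casep3} gives $R(\phi)=\infty$ for every automorphism, so TBFT$_f$ holds vacuously. For the remaining cases $\G=\Z_p\rwr\Z^k$ with prime $p>3$ and $\G=\Z_m\rwr\Z$ with $\gcd(m,6)=1$, let $\phi$ be an automorphism with $R(\phi)<\infty$; the goal is $R(\phi)=\#\Fix(\widehat{\phi})$. From Sections~\ref{sec:pge5} and~\ref{sec:case_p3} (the case $k=1$ follows from \cite{gowon1}), $\overline{\phi}$ has no non-trivial fixed vectors (Lemma~\ref{lem:modjab}) and only finite orbits (otherwise $R(\phi)=\infty$ by Lemma~\ref{lem:infiniteforinfiniteorbit}), hence $\overline{\phi}{}^N=\Id$ for some $N$. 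After conjugating by $-x_0\in\Z^k$ (Lemma~\ref{lem:innereqrei}) I may assume $\phi'(\delta_x)=m\delta_{\overline{\phi}(x)}$ (where $m$ denotes a unit in $\Z_p$, resp.\ $\Z_m$). The hypothesis $R(\phi)<\infty$ places us in case~(2) of Lemma~\ref{lem:reidem_num_1_or_inf}, so $1-\tau_g\phi'$ is surjective on $\Sigma$ for every $g\in\Z^k$; a direct analysis of the twisted-conjugacy equation in $\Sigma\rtimes\Z^k$, combined with $\Fix(\overline{\phi})=0$, then shows each Reidemeister class of $\phi$ is the full preimage of a Reidemeister class of $\overline{\phi}$, whence $R(\phi)=R(\overline{\phi})$.

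The next step is the reduction to a finite quotient. For any $\overline{\phi}$-invariant finite-index sublattice $L\subset\Z^k$ (for instance $L=n\Z^k$, which is always $\overline{\phi}$-invariant), the natural finite wreath-product quotient $\G\twoheadrightarrow\G_L$ has $\phi$-invariant kernel and an induced automorphism $\phi_L$; the same argument gives $R(\phi_L)=R(\overline{\phi_L})$, and for $n$ a multiple of $|\det(I-\overline{\phi})|$ the surjection $\Z^k/(I-\overline{\phi})\Z^k\twoheadrightarrow(\Z^k/L)/(I-\overline{\phi})(\Z^k/L)$ is an isomorphism, yielding $R(\overline{\phi})=R(\overline{\phi_L})$ and hence $R(\phi)=R(\phi_L)$. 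On the finite group $\G_L$, ordinary Burnside--Frobenius gives $R(\phi_L)=\#\Fix(\widehat{\phi_L})$.

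It remains to show $\#\Fix(\widehat{\phi_L})=\#\Fix(\widehat{\phi})$, which is the core of the proof. By Mackey theory, every finite-dimensional irreducible unitary representation of $\G$ has the form $\pi=\Ind^\G_{\Sigma\cdot\Z^k_\chi}(\chi\otimes\rho)$, with $\chi\in\widehat{\Sigma}$ of finite $\Z^k$-orbit (i.e.\ periodic with period lattice $\Z^k_\chi\le\Z^k$ of finite index) and $\rho$ a unitary character of $\Z^k_\chi$. Using $\chi\circ\phi'=m\cdot\overline{\phi}^*\chi$, the fixedness $\widehat{\phi}[\pi]=[\pi]$ translates to: (i) the $\Z^k$-orbits $[\chi]$ and $[m\cdot\overline{\phi}^*\chi]$ coincide in $\widehat{\Sigma}$, forcing $\overline{\phi}(\Z^k_\chi)=\Z^k_\chi$; and (ii) an affine equation $\rho\circ\overline{\phi}=c_{\chi,\mu}\cdot\rho$ on $\Z^k_\chi$, determined by $\chi$ and the cocycle $\mu$ coming from $\phi(0,g)=(\mu(g),\overline{\phi}(g))$. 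Since $\overline{\phi}$ has no non-trivial fixed vectors on $\Z^k_\chi$, the dual action on $\widehat{\Z^k_\chi}=\U(1)^k$ has no fixed direction in the Lie algebra, so its fixed set is \emph{finite}; equation~(ii) then has finitely many solutions $\rho$, each of finite order (controlled by $\overline{\phi}{}^N=\Id$). Combined with the finiteness of $\chi$'s satisfying~(i) --- they sit in the finite group $\widehat{\Sigma_L}$ once $L\subset\Z^k_\chi$ --- every $\widehat{\phi}$-fixed representation factors through some $\G_L$; taking $L$ small enough to accommodate all periods and orders of the finitely many such representations gives $\#\Fix(\widehat{\phi})=\#\Fix(\widehat{\phi_L})$, and combining with the previous paragraph yields the theorem. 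The main obstacle is precisely this Mackey-theoretic step --- producing a \emph{single} finite quotient through which all $\widehat{\phi}$-fixed representations factor --- which rests on the no-fixed-vectors property of $\overline{\phi}$ (collapsing the continuous torus $\widehat{\Z^k_\chi}$ to a finite fixed set) together with $\overline{\phi}{}^N=\Id$.
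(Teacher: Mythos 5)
There is a genuine gap, in fact two. First, your treatment of $\Z_3\rwr\Z^{2d}$ is backwards: Theorem \ref{teo:casep3} says that $\Z_3\rwr\Z^k$ has $R_\infty$ for \emph{odd} $k$ and does \emph{not} have it for even $k$, so for $\G=\Z_3\rwr\Z^{2d}$ there do exist automorphisms with $R(\phi)<\infty$ (this is precisely why the case appears in the theorem), and the TBFT$_f$ is not vacuous there. Your main argument excludes $p=3$, so as written this case is simply not handled; it could be repaired by observing that Lemma \ref{lem:reidem_num_1_or_inf} is stated for an arbitrary prime, but the vacuity claim must go. Second, and more seriously, the step you yourself call the core is not closed: to replace $\#\Fix(\widehat{\phi})$ by $\#\Fix(\widehat{\phi_L})$ for a \emph{single} finite quotient you need either an a priori bound on $\#\Fix(\widehat{\phi})$ or a uniform bound on the period lattices $\Z^k_\chi$ (and on the orders of the characters $\rho$) over \emph{all} $\widehat{\phi}$-fixed representations. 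Your justification ``they sit in the finite group $\widehat{\Sigma_L}$ once $L\subset\Z^k_\chi$'' only works for a fixed $L$, while $L$ depends on $\chi$; the phrase ``the finitely many such representations'' is exactly what has to be proved, so the choice of one sufficiently small $L$ is circular. (Also, ``each of finite order, controlled by $\overline{\phi}{}^N=\Id$'' is not the right reason: finiteness of the order of $\rho$ comes from the fact that $\rho^{\,p}$ is fixed by the dual of $\overline{\phi}|_{\Z^k_\chi}$, whose fixed subgroup is finite since $\det(I-\overline{\phi})\neq 0$, together with the values of the cocycle character lying in $p$-th roots of unity. And the assertion $R(\phi_L)=R(\overline{\phi_L})$ needs a check that no orbit length modulo $L$ hits $1-m^s\equiv 0$; this requires an argument such as $1-m^N\not\equiv 0$ for $\overline{\phi}{}^N=\Id$, which you do not supply.)

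For comparison, the paper's proof avoids all of this: once the Reidemeister classes of $\phi$ are shown to be cylinders over those of $\overline{\phi}$ (your first paragraph, which does match the paper), it pulls back the $R(\overline{\phi})=\#\Fix(\widehat{\overline{\phi}})$ fixed characters of $\Z^k$ along $\pi:\G\to\Z^k$ to get $R(\phi)\le\#\Fix(\widehat{\phi})$, and quotes the general inequality $\#\Fix(\widehat{\phi})\le R(\phi)$ from the proof of Theorem 5.2 of \cite{polyc} (or Lemma 3.8 there), with the $\Z_m\rwr\Z$ case of $R(\tau_g\circ\phi')=1$ taken from \cite{gowon1}. If you want to keep your Mackey-theoretic route, there is a much shorter way to close it: condition (i) says $\chi\circ\phi'$ lies in the $\Z^k$-orbit of $\chi$, i.e.\ $\chi$ is fixed by the dual of some $(\tau_w\circ\phi)'$; but in case (2) of Lemma \ref{lem:reidem_num_1_or_inf} the map $1-(\tau_w\circ\phi)'$ is surjective on $\Sigma$, so such a $\chi$ is trivial. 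Hence every $\widehat{\phi}$-fixed finite-dimensional irreducible representation factors through $\Z^k$, and $\#\Fix(\widehat{\phi})=\#\Fix(\widehat{\overline{\phi}})=R(\overline{\phi})=R(\phi)$ directly, with no finite quotient $\G_L$ and no uniform-period argument needed.
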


\begin{proof}
In all these cases for $\phi$ with $R(\phi)<\infty$, we
have $R(\t_g\circ \phi')=1$ (for prime cases this is proved
in Lemma \ref{lem:reidem_num_1_or_inf}, for $\Z_m\rwr\Z$
a similar statement can be easily extracted from
the proof of Theorem 2.3 in \cite{gowon1}).

Evidently, $\{h\}_{\phi'}\subset \{h\}_{\phi}$, $h\in\Sigma$.
Thus, $R(\phi')=1$ implies that only one Reidemeister class
of $\phi$ is mapped onto the class $\{e\}_{\overline{\phi}}$.
By Lemma \ref{lem:innereqrei} $R(\t_g\circ \phi')=1$ implies
the same for other classes $\{g\}_{\overline{\phi}}$.
Thus, $\pi: \Z_p \rwr \Z^k \to \Z^k$ induces a bijection of
Reidemeister classes, or, speaking geometrically, the
Reidemeister classes of $\phi$ are cylinders over 
Reidemeister classes of $\overline{\phi}$. Then
$R=R(\phi)= R(\overline{\phi})=\# \Fix (\widehat{\overline{\phi}})$,
because the TBFT is true for an automorphism of a finitely
generated Abelian group (see \cite{FelshB}). If
$\r_1,\dots,\r_R$ are these representations, then 
$\r_1\circ \pi, \dots, \r_R\circ \pi$ are some pairwise non-equivalent $\widehat{\phi}$-fixed  representations of $\G$. 
Then $R(\phi)\le \# \Fix (\widehat{\phi})$. The opposite
inequality is always true (\cite[The proof of Theorem 5.2]{polyc}).

An alternative argument is to see that, since 
the classes are cylinders,
the dimension of the space of shifts
of indicator functions of Reidemeister classes of $\phi$
is the same as for $\overline{\phi}$, and in particular,
is finite. Then the TBFT$_f$ follows from Lemma 3.8 of
\cite{polyc}.
\end{proof}

\def\cprime{$'$} \def\cprime{$'$} \def\cprime{$'$} \def\cprime{$'$}
  \def\cprime{$'$} \def\cprime{$'$} \def\cprime{$'$} \def\cprime{$'$}
  \def\dbar{\leavevmode\hbox to 0pt{\hskip.2ex \accent"16\hss}d}
  \def\cprime{$'$} \def\cprime{$'$}
  \def\polhk#1{\setbox0=\hbox{#1}{\ooalign{\hidewidth
  \lower1.5ex\hbox{`}\hidewidth\crcr\unhbox0}}} \def\cprime{$'$}
  \def\cprime{$'$} \def\cprime{$'$} \def\cprime{$'$}

\end{document}